\renewcommand{\baselinestretch}{1.05} 
\numberwithin{equation}{section}
\newtheorem{theorem}{Theorem}[section]
\newtheorem{proposition}{Proposition}[section]
\newtheorem{remark}{Remark}[section]
\newtheorem{assumption}{Assumption}[section]
\def\ba{\boldsymbol{a}}
\def\bc{\boldsymbol{c}}
\def\be{\boldsymbol{e}}
\def\bg{\boldsymbol{g}}
\def\bi{\boldsymbol{i}}
\def\bk{\boldsymbol{k}}
\def\bl{\boldsymbol{l}}
\def\bu{\boldsymbol{u}}
\def\bv{\boldsymbol{v}}
\def\bY{\boldsymbol{Y}}
\def\bphi{\boldsymbol{\phi}}
\def\bvarphi{\boldsymbol{\varphi}}
\def\bpi{\boldsymbol{\pi}}
\def\bnu{\boldsymbol{\nu}}
\def\bzero{\mathbf{0}}
\def\bone{\mathbf{1}}
\def\calS{\mathcal{S}}
\def\calL{\mathcal{L}}
\def\scrI{\mathscr{I}}
\def\spr{\mbox{\rm spr}}
\def\diag{\mbox{\rm diag}}
\def\adj{\mbox{\rm adj}}
\def\vecM{\mbox{\rm vec}}
\title{Exact asymptotic formulae of the stationary distribution of a discrete-time 2d-QBD process: an example and additional proofs}
\author{Toshihisa Ozawa${}^{\dagger}$ and Masahiro Kobayashi${}^{\dagger\dagger}$ \\ 
${}^{\dagger}$Faculty of Business Administration, Komazawa University \\
${}^{\dagger\dagger}$Department of Mathematical Science, Tokai University \\
${}^{\dagger}$1-23-1 Komazawa, Setagaya-ku, Tokyo 154-8525, Japan \\
E-mail: ${}^{\dagger}$toshi@komazawa-u.ac.jp, ${}^{\dagger\dagger}$m\_kobayashi@tsc.u-tokai.ac.jp 
}
\date{}
\begin{document}

\maketitle

\begin{abstract}
A discrete-time two-dimensional quasi-birth-and-death process (2d-QBD process), denoted by $\{\bY_n\}=\{(X_{1,n},X_{2,n},J_n)\}$, is a two-dimensional skip-free random walk $\{(X_{1,n},X_{2,n})\}$ on $\mathbb{Z}_+^2$ with a supplemental process $\{J_n\}$ on a finite set $S_0$.  The supplemental process $\{J_n\}$ is called a phase process. The 2d-QBD process $\{\bY_n\}$ is a Markov chain in which the transition probabilities of the two-dimensional process $\{(X_{1,n},X_{2,n})\}$ vary according to the state of the phase process $\{J_n\}$. This modulation is assumed to be space homogeneous except for the boundaries of $\mathbb{Z}_+^2$. 
Under certain conditions, the directional exact asymptotic formulae of the stationary distribution of the 2d-QBD process have been obtained in Ref.\ \cite{Ozawa18a}. 
In this paper, we give an example of 2d-QBD process and proofs of some lemmas and propositions appeared in Ref.\ \cite{Ozawa18a}.

\smallskip
{\it Key wards}: quasi-birth-and-death process, stationary distribution, asymptotic property, matrix analytic method, two-dimensional reflecting random walk

\smallskip
{\it Mathematical Subject Classification}: 60J10, 60K25
\end{abstract}

%
%

%
\section{Introduction} \label{sec:intro}

A discrete-time two-dimensional quasi-birth-and-death process (2d-QBD process), denoted by $\{\bY_n\}=\{(X_{1,n},X_{2,n},J_n)\}$, is a two-dimensional skip-free random walk $\{(X_{1,n},X_{2,n})\}$ on $\mathbb{Z}_+^2$ with a supplemental process $\{J_n\}$ on a finite set $S_0$ (see Ref.\ \cite{Ozawa13}).  The supplemental process $\{J_n\}$ is called a phase process. The 2d-QBD process $\{\bY_n\}$ is a Markov chain in which the transition probabilities of the two-dimensional process $\{(X_{1,n},X_{2,n})\}$ vary according to the state of the phase process $\{J_n\}$. This modulation is assumed to be space homogeneous except for the boundaries of $\mathbb{Z}_+^2$. 
Assume that the 2d-QBD process $\{\bY_n\}$ is irreducible, aperiodic and positive recurrent, and denote by $\bnu=(\bnu_{k,l},\,(k,l)\in\mathbb{Z}_+^2)$ its stationary distribution, where $\bnu_{k,l}=(\nu_{k,l,j},\,j\in S_0)$ and $\nu_{k,l,j}=\lim_{n\to\infty} \mathbb{P}(\bY_n=(k,l,j))$.  
In Ref.\ \cite{Ozawa18a},  we have obtained, under certain conditions, the directional exact asymptotic formulae $h_1(k)$ and $h_2(k)$ that satisfy, for some nonzero vector $\bc_1$ and $\bc_2$, 
\begin{equation}
\lim_{k\to\infty} \frac{\bnu_{k,0}}{h_1(k)}=\bc_1,\qquad 
\lim_{k\to\infty} \frac{\bnu_{0,k}}{h_2(k)}=\bc_2.
\end{equation}
In this paper, we give an example of 2d-QBD process and proofs of some lemmas and propositions appeared in Ref.\ \cite{Ozawa18a}, in order to make that paper easy to understand.

The rest of the paper is organized as follows. In Section \ref{sec:model}, we summarize main results of Ref.\ \cite{Ozawa18a} and related topics. Numerical examples are presented in Section \ref{sec:example}, where a single-server two-queue model is considered. Proofs of some lemmas and propositions in Ref.\ \cite{Ozawa18a} are given in Section \ref{sec:proofs}.

\bigskip
\noindent\textit{Notation.}\quad
%
A set $\mathbb{H}$ is defined as $\mathbb{H}=\{-1,0,1\}$ and $\mathbb{H}_+$ as $\mathbb{H}_+=\{0,1\}$. 
For $a,b\in\mathbb{R}_+$, $\mathbb{C}[a,b]$ and $\mathbb{C}[a,b)$ are defined as $\mathbb{C}[a,b] = \{z\in\mathbb{C}: a\le |z|\le b \}$ and $\mathbb{C}[a,b) = \{z\in\mathbb{C}: a\le |z|< b \}$, respectively. $\mathbb{C}(a,b]$ and $\mathbb{C}(a,b)$ are analogously defined. 
%
%
For $r>0$, $\varepsilon>0$ and $\theta\in[0,\pi/2)$, $\tilde{\Delta}_r(\varepsilon,\theta)$ is defined as  
\[
\tilde{\Delta}_r(\varepsilon,\theta) = \{z\in\mathbb{C}: |z|<r+\varepsilon,\ z\ne r,\ |\arg(z-r)|>\theta \}. 
\]
For $r>0$, we denote by ``$\tilde{\Delta}_r\ni z\to r$" that $\tilde{\Delta}_r(\varepsilon,\theta)\ni z\to r$ for some $\varepsilon>0$ and some $\theta\in [0,\pi/2)$. 
%
For a matrix $A=(a_{ij})$, we denote by $[A]_{ij}$ the $(i,j)$-entry of $A$. The transpose of $A$ is denoted by $A^\top$. 
We denote by $\spr(A)$ the spectral radius of $A$, which is the maximum modulus of eigenvalue of $A$. 
We denote by $|A|$ the matrix each of whose entries is the modulus of the corresponding entry of $A$, i.e., $|A|=(|a_{ij}|)$. 
$O$ is a matrix of $0$'s, $\bone$ is a column vector of $1$'s and $\bzero$ is a column vector of $0$'s; their dimensions are determined in context. $I$ is the identity matrix. 
For a $k\times l$ matrix $A=\begin{pmatrix} \ba_1 & \ba_2 & \cdots & \ba_l \end{pmatrix}$, ${\rm vec}(A)$ is a $kl\times 1$ vector defined as 
\[
\vecM(A) = \begin{pmatrix} \ba_1 \cr \ba_2 \cr \vdots \cr \ba_l \end{pmatrix}. 
\]
For matrices $A$, $B$ and $C$, the identity $\vecM(ABC)=(C^\top\otimes A)\,\vecM(B)$ holds (see, for example, 
Horn and Johnson \cite{Horn91}).

%
%
\section{Model description and main results of Ref.\ \cite{Ozawa18a}} \label{sec:model} 

Here we summarize Section 2 of Ref.\ \cite{Ozawa18a}. 
%
%
\subsection{2d-QBD process} \label{sec:tdQBD}

%
Let $S_0=\{1,2,...,s_0\}$ be a finite set, where $s_0$ is the number of elements of $S_0$. A 2d-QBD process $\{\bY_n\} = \{ (X_{1,n}, X_{2,n}, J_n) \}$ is a discrete-time Markov chain on the state space $\calS = \mathbb{Z}_+^2 \times S_0$. The transition probability matrix $P$ of $\{\bY_n\}$ is represented in block form as
\[
P=\begin{pmatrix} P_{(x_1,x_2),(x_1',x_2')};(x_1,x_2),(x_1',x_2')\in\mathbb{Z}_+^2 \end{pmatrix},
\]
where each block $P_{(x_1,x_2),(x_1',x_2')}$ is given as $P_{(x_1,x_2),(x_1',x_2')}=\begin{pmatrix} p_{(x_1,x_2,j),(x_1',x_2',j')}; j,j'\in S_0 \end{pmatrix}$ and for $(x_1,x_2,j),(x_1',x_2',j')\in\calS$, $p_{(x_1,x_2,j),(x_1',x_2',j')} = \mathbb{P}(\bY_1=(x_1',x_2',j')\,|\,\bY_0=(x_1,x_2,j))$.
The block matrices are given in terms of $s_0\times s_0$ non-negative matrices 
\[
A_{i,j}, i,j\in\mathbb{H},\quad
A^{(1)}_{i,j}, i\in\mathbb{H},j\in\mathbb{H}_+,\quad
A^{(2)}_{i,j}, i\in\mathbb{H}_+,j\in\mathbb{H},\quad
A^{(0)}_{i,j}, i,j\in\mathbb{H}_+,
\]
as follows: for $(x_1,x_2),(x_1',x_2')\in\mathbb{Z}_+^2$, 
\[
P_{(x_1,x_2),(x_1',x_2')} = \left\{ \begin{array}{ll} 
A_{\varDelta x_1,\varDelta x_2}, & \mbox{if  $x_1\ne 0$, $x_2\ne 0$, $\varDelta x_1,\varDelta x_2\in\mathbb{H}$}, \cr 
A^{(1)}_{\varDelta x_1,\varDelta x_2}, & \mbox{if $x_1\ne 0$, $x_2=0$, $\varDelta x_1\in\mathbb{H}$, $\varDelta x_2\in\mathbb{H}_+$}, \cr
A^{(2)}_{\varDelta x_1,\varDelta x_2}, & \mbox{if $x_1=0$, $x_2\ne 0$, $\varDelta x_1\in\mathbb{H}_+$, $\varDelta x_2\in\mathbb{H}$}, \cr
A^{(0)}_{\varDelta x_1,\varDelta x_2}, & \mbox{if $x_1=x_2=0$, $\varDelta x_1,\varDelta x_2\in\mathbb{H}_+$}, \cr
O, & \mbox{otherwise}, \end{array} \right. 
\]
where $\varDelta x_1=x_1'-x_1$ and $\varDelta x_2=x_2'-x_2$. 
%
Define matrices $A_{*,*}$, $A^{(1)}_{*,*}$, $A^{(2)}_{*,*}$ and $A^{(0)}_{*,*}$ as
\[
A_{*,*} = \sum_{i,j\in\mathbb{H}} A_{i,j},\quad
A^{(1)}_{*,*} = \sum_{i\in\mathbb{H},j\in\mathbb{H}_+} A^{(1)}_{i,j},\quad
A^{(2)}_{*,*} = \sum_{i\in\mathbb{H}_+,j\in\mathbb{H}} A^{(2)}_{i,j},\quad
A^{(0)}_{*,*} = \sum_{i,j\in\mathbb{H}_+} A^{(0)}_{i,j}. 
\]
All of these matrices are stochastic. 
%
%
%
%
We assume the following condition. 
\begin{assumption} \label{as:irreducibleYn}
The Markov chain $\{\bY_n\}$ is irreducible and aperiodic.
\end{assumption}


We consider three kinds of Markov chain generated from $\{\bY_n\}$ by removing one or two boundaries and denote them by $\{\tilde{\bY}_n\}=\{(\tilde{X}_{1,n},\tilde{X}_{2,n},\tilde{J}_n)\}$, $\{\tilde{\bY}^{(1)}_n\}=\{(\tilde{X}^{(1)}_{1,n},\tilde{X}^{(1)}_{2,n},\tilde{J}^{(1)}_n)\}$ and $\{\tilde{\bY}^{(2)}_n\}=\{(\tilde{X}^{(2)}_{1,n},\tilde{X}^{(2)}_{2,n},\tilde{J}^{(2)}_n)\}$, respectively.  
$\{\tilde{\bY}_n\}$ is a Markov chain on the state space $\mathbb{Z}^2\times S_0$ and it is generated from $\{\bY_n\}$ by removing the boundaries on the $x_1$ and $x_2$-axes. 
For $(x_1,x_2,j),(x_1',x_2',j')\in\mathbb{Z}^2\times S_0$, denote by $\tilde{p}_{(x_1,x_2,j),(x_1',x_2',j')}$ the transition probability $\mathbb{P}(\tilde{\bY}_1=(x_1',x_2',j')\,|\,\tilde{\bY}_0=(x_1,x_2,j))$. 
The transition probability matrix $\tilde{P}$ of $\{\tilde{\bY}_n\}$ is represented in block form as
\[
\tilde{P}=\begin{pmatrix} \tilde{P}_{(x_1,x_2),(x_1',x_2')};(x_1,x_2),(x_1',x_2')\in\mathbb{Z}^2 \end{pmatrix},
\]
where $\tilde{P}_{(x_1,x_2),(x_1',x_2')}=\begin{pmatrix} \tilde{p}_{(x_1,x_2,j),(x_1',x_2',j')}; j,j'\in S_0 \end{pmatrix}$; each block $\tilde{P}_{(x_1,x_2),(x_1',x_2')}$ is given as 
\[
\tilde{P}_{(x_1,x_2),(x_1',x_2')}
= \left\{ \begin{array}{ll} A_{\varDelta x_1,\varDelta x_2}, & \mbox{if $\varDelta x_1,\varDelta x_2\in\mathbb{H}$}, \cr 
O, & \mbox{otherwise}, \end{array} \right. 
\]
where $\varDelta x_1=x_1'-x_1$ and $\varDelta x_2=x_2'-x_2$. 
%
The Markov chain $\{\tilde{\bY}_n\}$ is a Markov chain generated by $\{A_{k,l}:\,k,l\in\mathbb{H}\}$. 
%
%
%
$\{\tilde{\bY}^{(1)}_n\}$ is a Markov chain on the state space $\mathbb{Z}\times\mathbb{Z}_+\times S_0$ and it is generated from $\{\bY_n\}$ by removing the boundary on the $x_2$-axes. 
For $(x_1,x_2,j),(x_1',x_2',j')\in\mathbb{Z}\times\mathbb{Z}_+\times S_0$, denote by $\tilde{p}^{(1)}_{(x_1,x_2,j),(x_1',x_2',j')}$ the transition probability $\mathbb{P}(\tilde{\bY}^{(1)}_1=(x_1',x_2',j')\,|\,\tilde{\bY}^{(1)}_0=(x_1,x_2,j))$. 
The transition probability matrix $\tilde{P}^{(1)}$ of $\{\tilde{\bY}^{(1)}_n\}$ is represented in block form as
\[
\tilde{P}^{(1)} = \begin{pmatrix} \tilde{P}^{(1)}_{(x_1,x_2),(x_1',x_2')};(x_1,x_2),(x_1',x_2')\in\mathbb{Z}\times\mathbb{Z}_+ \end{pmatrix},
\]
where $\tilde{P}^{(1)}_{(x_1,x_2),(x_1',x_2')}=\begin{pmatrix} \tilde{p}^{(1)}_{(x_1,x_2,j),(x_1',x_2',j')}; j,j'\in S_0 \end{pmatrix}$; each block $\tilde{P}^{(1)}_{(x_1,x_2),(x_1',x_2')}$ is given as 
\[
\tilde{P}^{(1)}_{(x_1,x_2),(x_1',x_2')} 
= \left\{ \begin{array}{ll} 
A_{\varDelta x_1,\varDelta x_2}, & \mbox{if  $x_2\ne 0$, $\varDelta x_1,\varDelta x_2\in\mathbb{H}$}, \cr 
A^{(1)}_{\varDelta x_1,\varDelta x_2}, & \mbox{if $x_2=0$, $\varDelta x_1\in\mathbb{H}$, $\varDelta x_2\in\mathbb{H}_+$}, \cr
O, & \mbox{otherwise}, 
\end{array} \right. 
\]
where $\varDelta x_1=x_1'-x_1$ and $\varDelta x_2=x_2'-x_2$. 
$\{\tilde{\bY}^{(2)}_n\}$ is a Markov chain on the state space $\mathbb{Z}_+\times\mathbb{Z}\times S_0$ and it is generated from $\{\bY_n\}$ by removing the boundary on the $x_1$-axes. The transition probability matrix $\tilde{P}^{(2)}$ of $\{\tilde{\bY}^{(2)}_n\}$ is analogously given. 
%
The Markov chain $\{\tilde{\bY}^{(1)}_n\}$ is a Markov chain generated by $\{ \{A_{k,l}:k,l\in\mathbb{H}\},\,\{A_{k,l}^{(1)}:k\in\mathbb{H},\,l\in\mathbb{H}_+\} \}$ and $\{\tilde{\bY}^{(2)}_n\}$ is that generated by $\{ \{A_{k,l}:k,l\in\mathbb{H}\},\,\{A_{k,l}^{(2)}:k\in\mathbb{H}_+,\,l\in\mathbb{H}\} \}$. 
%
%
%
Hereafter, we assume the following condition.
%
\begin{assumption} \label{as:Akl_irreducible} 
$\{\tilde{\bY}_n\}$, $\{\tilde{\bY}^{(1)}_n\}$ and $\{\tilde{\bY}^{(2)}_n\}$ are irreducible and aperiodic. 
\end{assumption}

%
Under Assumption \ref{as:Akl_irreducible}, $A_{*,*}$ is irreducible and aperiodic.
%
$A_{*,*}$ is, therefore, positive recurrent and ergodic since its dimension is finite. We denote by $\bpi_{*,*}$ the stationary distribution of $A_{*,*}$.

\medskip
%

%
A 2d-QBD process $\{\bY_n\} = \{ (X_{1,n}, X_{2,n}, J_n) \}$ can be represented as a QBD process with a countable phase space in two ways: One is $\{\bY_n^{(1)}\} = \{(X_{1,n}, (X_{2,n}, J_n))\}$, where $X_{1,n}$ is the level and $(X_{2,n}, J_n)$ the phase, and the other $\{\bY_n^{(2)}\} = \{(X_{2,n}, (X_{1,n}, J_n))\}$, where $X_{2,n}$ is the level and $(X_{1,n}, J_n)$ the phase (see Subsection 2.3 of Ref.\ \cite{Ozawa18a}).
Let $R^{(1)}$ and $R^{(2)}$ be the rate matrices of the QBD processes $\{\bY^{(1)}\}$ and $\{\bY^{(2)}\}$, respectively. 
Hereafter, we assume the following condition.
\begin{assumption} \label{as:irreducibleR1R2}
The rate matrices $R^{(1)}$ and $R^{(2)}$ are irreducible.
\end{assumption}


%
%
%

%
\subsection{Stationary condition} \label{sec:stationary_cond}

We define induced Markov chains and the mean drift vectors derived from the induced Markov chains (see Ref.\ \cite{Fayolle95}).
Since the 2d-QBD process is a kind of two-dimensional reflecting random walk, there exist three induced Markov chains: $\calL^{\{1,2\}}$, $\calL^{\{1\}}$ and $\calL^{\{2\}}$. 
$\calL^{\{1,2\}}$ is the phase process $\{\tilde{J}_n\}$ of $\{\tilde{\bY}_n\}$ and it is a Markov chain governed by the transition probability matrix $A_{*,*}$. The mean drift vector $\ba^{\{1,2\}}=(a^{\{1,2\}}_1,a^{\{1,2\}}_2)$ derived from $\calL^{\{1,2\}}$ is given as 
\[
a^{\{1,2\}}_1 = \bpi_{*,*} (-A_{-1,*}+A_{1,*}) \bone,\quad 
a^{\{1,2\}}_2 = \bpi_{*,*} (-A_{*,-1}+A_{*,1}) \bone,  
\]
where for $k\in\mathbb{H}$, $A_{k,*}=\sum_{l\in\mathbb{H}}A_{k,l}$ and $A_{*,k}=\sum_{l\in\mathbb{H}}A_{l,k}$. 
Define block tri-diagonal transition probability matrices $A^{(1)}_*$ and $A^{(2)}_*$ as 
\[
A^{(1)}_* = 
\begin{pmatrix}
A^{(1)}_{*,0} & A^{(1)}_{*,1} & & & \cr
A_{*,-1} & A_{*,0} & A_{*,1} & & \cr
& A_{*,-1} & A_{*,0} & A_{*,1} & \cr
& & \ddots & \ddots & \ddots 
\end{pmatrix},\ 
A^{(2)}_* = 
\begin{pmatrix}
A^{(2)}_{0,*} & A^{(2)}_{1,*} & & & \cr
A_{-1,*} & A_{0,*} & A_{1,*} & & \cr
& A_{-1,*} & A_{0,*} & A_{1,*} & \cr
& & \ddots & \ddots & \ddots 
\end{pmatrix}, 
\]
where for $k\in\mathbb{H}_+$, $A^{(1)}_{*,k}=\sum_{l\in\mathbb{H}}A^{(1)}_{l,k}$ and $A^{(2)}_{k,*}=\sum_{l\in\mathbb{H}}A^{(2)}_{k,l}$. 
Under Assumption \ref{as:Akl_irreducible}, $A^{(1)}_*$ and $A^{(2)}_*$ are irreducible and aperiodic.
%
%
$\calL^{\{1\}}$ (resp.\ $\calL^{\{2\}}$) is a partial process $\{(\tilde{X}^{(1)}_{2,n},\tilde{J}^{(1)}_n)\}$ of $\{\tilde{\bY}^{(1)}_n\}$ (resp.\ $\{(\tilde{X}^{(2)}_{1,n},\tilde{J}^{(2)}_n)\}$ of $\{\tilde{\bY}^{(2)}_n\}$) and it is a Markov chain governed by $A^{(1)}_*$ (resp.\ $A^{(2)}_*$). 
Since $\calL^{\{1\}}$ (resp.\ $\calL^{\{2\}}$) is one-dimensional QBD process, it is positive recurrent if and only if $a^{\{1,2\}}_2<0$ (resp.\ $a^{\{1,2\}}_1<0$). 
Denote by $\bpi^{(1)}_*=(\bpi^{(1)}_{*,k},k\in\mathbb{Z}_+)$ and $\bpi^{(2)}_*=(\bpi^{(2)}_{*,k},k\in\mathbb{Z}_+)$ the stationary distributions of $\calL^{\{1\}}$ and $\calL^{\{2\}}$, respectively, if they exist. Then, the mean drift vectors $\ba^{\{1\}}=(a^{\{1\}}_1,a^{\{1\}}_2)$ and $\ba^{\{2\}}=(a^{\{2\}}_1,a^{\{2\}}_2)$ derived from $\calL^{\{1\}}$ and $\calL^{\{2\}}$ are given as 
\begin{align*}
a^{\{1\}}_1 &= \bpi_{*,0}^{(1)}\big(-A_{-1,0}^{(1)}-A_{-1,1}^{(1)}+A_{1,0}^{(1)}+A_{1,1}^{(1)} \big)\,\bone + \bpi_{*,1}^{(1)}\big(I-R^{(1)}_*\big)^{-1} \big(-A_{-1,*}+A_{1,*} \big)\,\bone,\\
a^{\{1\}}_2 &= 0,\quad 
a^{\{2\}}_1 = 0, \\
a^{\{2\}}_2 &= \bpi_{*,0}^{(2)}\big(-A_{0,-1}^{(2)}-A_{1,-1}^{(2)}+A_{0,1}^{(2)}+A_{1,1}^{(2)} \big)\,\bone + \bpi_{*,1}^{(2)}\big(I-R^{(2)}_*\big)^{-1} \big(-A_{*,-1}+A_{*,1} \big)\,\bone, 
\end{align*}
where $R^{(1)}_*$ and $R^{(2)}_*$ are the rate matrices of $A_*^{(1)}$ and $A_*^{(2)}$, respectively. 
The condition ensuring  $\{\bY_n\}$ is positive recurrent or transient is given by Lemma 2.1 of Ref.\ \cite{Ozawa18a}. 
%
%
%
%
%
In order for the 2d-QBD process $\{\bY_n\}$ to be positive recurrent, we assume the following condition. 
\begin{assumption} \label{as:stability_cond} 
$a^{\{1,2\}}_1$ or $a^{\{1,2\}}_2$ is negative. 
If $a^{\{1,2\}}_1<0$ and $a^{\{1,2\}}_2<0$, then $a^{\{1\}}_1<0$ and $a^{\{2\}}_2<0$; if $a^{\{1,2\}}_1>0$ and $a^{\{1,2\}}_2<0$, then $a^{\{1\}}_1<0$; if $a^{\{1,2\}}_1<0$ and $a^{\{1,2\}}_2>0$, then $a^{\{2\}}_2<0$. 
\end{assumption}
Denote by $\bnu = \left(\nu_{k,l,j},\, (k,l,j)\in\mathbb{Z}_+^2\times S_0 \right)$ the stationary distribution of $\{\bY_n\}$. $\bnu$ is represented in block form as $\bnu = \left( \bnu_{k,l},\, (k,l)\in\mathbb{Z}_+^2 \right)$, where $\bnu_{k,l} = \left(\nu_{k,l,j},\, j\in S_0 \right)$.

%
\subsection{Directional geometric decay rates}

%
Let $z_1$ and $z_2$ be positive real numbers and define a matrix function $C(z_1,z_2)$ as
\[
C(z_1,z_2) 
= \sum_{i,j\in\mathbb{H}} A_{i,j} z_1^i z_2^j 
= \sum_{j\in\mathbb{H}} A_{*,j}(z_1) z_2^j 
= \sum_{i\in\mathbb{H}} A_{i,*}(z_2) z_1^i, 
\] 
where 
\[
A_{*,j}(z_1)=\sum_{i\in\mathbb{H}} A_{i,j} z_1^i,\quad 
A_{i,*}(z_2)=\sum_{j\in\mathbb{H}} A_{i,j} z_2^j. 
\]
The matrix function $C(z_1,z_2)$ is nonnegative and, under Assumption \ref{as:Akl_irreducible}, it is irreducible and aperiodic. Let $\chi(z_1,z_2)$ be the Perron-Frobenius eigenvalue of $C(z_1,z_2)$, i.e., $\chi(z_1,z_2)=\spr(C(z_1,z_2))$, and let $\bu^C(z_1,z_2)$ and $\bv^C(z_1,z_2)$ be the Perron-Frobenius left and right eigenvectors satisfying $\bu^C(z_1,z_2)\bv^C(z_1,z_2)=1$. 
We have $\bu^C(1,1)=\bpi_{*,*}$ and $\bv^C(1,1)=\bone$. 
%
%
%
Define a closed set $\bar{\Gamma}$ as 
\[
\bar{\Gamma}=\{ (s_1,s_2)\in\mathbb{R}^2 : \chi(e^{s_1},e^{s_2}) \le 1\}.
\]
Since $\chi(1,1)=\chi(e^0,e^0)=1$, $\bar{\Gamma}$ contains the point of $(0,0)$ and thus it is not empty. By Proposition 2.3 and Lemma 2.2 of Ref.\ \cite{Ozawa18a}, $\bar{\Gamma}$ is a bounded convex set. 
%
%
%
%
%
%
%
%
Furthermore, by Lemma 2.3 of Ref.\ \cite{Ozawa18a}, the closed set $\bar{\Gamma}$ is not a singleton. 
%
%
%
For $i\in\{1,2\}$, define the lower and upper extreme values of $\bar{\Gamma}$ with respect to $s_i$, denoted by $\underline{s}_i^*$ and $\bar{s}_i^*$, as 
\[
\underline{s}_i^* = \min_{(s_1,s_2)\in\bar{\Gamma}} s_i,\quad 
\bar{s}_i^* = \max_{(s_1,s_2)\in\bar{\Gamma}} s_i,
\]
where $-\infty< \underline{s}_i^* \le 0$ and $0\le \bar{s}_i^* < \infty$. 
For $i\in\{1,2\}$, define $\underline{z}_i^*$ and $\bar{z}_i^*$ as $\underline{z}_i^*=e^{\underline{s}_i^*}$ and $\bar{z}_i^*=e^{\bar{s}_i^*}$, respectively, where $0<\underline{z}_i^*\le 1$ and $1\le \bar{z}_i^*<\infty$. 
%
%
%
By Proposition 2.4 of Ref.\ \cite{Ozawa18a}, for each $z_1\in(\underline{z}_1^*,\bar{z}_1^*)$ (resp.\ $z_2\in(\underline{z}_2^*,\bar{z}_2^*)$), equation $\chi(z_1,z_2)=1$ has just two different real solutions $\underline{\zeta}_2(z_1)$ and $\bar{\zeta}_2(z_1)$ (resp.\ $\underline{\zeta}_1(z_2)$ and $\bar{\zeta}_1(z_2)$), where $\underline{\zeta}_2(z_1)<\bar{\zeta}_2(z_1)$ (resp.\ $\underline{\zeta}_1(z_2)<\bar{\zeta}_1(z_2)$). 
For $z_1=\underline{z}_1^*\mbox{ or }\bar{z}_1^*$ (resp.\ $z_2=\underline{z}_2^*\mbox{ or }\bar{z}_2^*$), it has just one real solution $\underline{\zeta}_2(z_1)=\bar{\zeta}_2(z_1)$ (resp.\ $\underline{\zeta}_1(z_2)=\bar{\zeta}_1(z_2)$). 
If $z_1\notin[\underline{z}_1^*,\bar{z}_1^*]$ (resp.\ $z_2\notin[\underline{z}_2^*,\bar{z}_2^*]$), it has no real solutions. 

Consider the following matrix quadratic equations of $X$:
\begin{align}
A_{*,-1}(z_1) + A_{*,0}(z_1) X + A_{*,1}(z_1) X^2 = X, 
\label{eq:equationG1} \\
A_{-1,*}(z_2) + A_{0,*}(z_2) X + A_{1,*}(z_2) X^2 = X.  
\label{eq:equationG2}
\end{align}
Denote by $G_1(z_1)$ and $G_2(z_2)$ the minimum nonnegative solutions to matrix equations (\ref{eq:equationG1}) and (\ref{eq:equationG2}), respectively. 
Furthermore, consider the following matrix quadratic equations of $X$:
\begin{align}
 X^2 A_{*,-1}(z_1) + X A_{*,0}(z_1) + A_{*,1}(z_1) = X, 
\label{eq:equationR1} \\
 X^2 A_{-1,*}(z_2) + X A_{0,*}(z_2) + A_{1,*}(z_2) = X.  
\label{eq:equationR2}
\end{align}
Denote by $R_1(z_1)$ and $R_2(z_2)$ the minimum nonnegative solutions to matrix equations (\ref{eq:equationR1}) and (\ref{eq:equationR2}), respectively.
%
%
By Lemma 2.4 of Ref.\ \cite{Ozawa18a}, for $z\in\mathbb{R}_+\setminus\{0\}$, the minimum nonnegative solutions $G_1(z)$ and $R_1(z)$ (resp.\ $G_2(z)$ and $R_2(z)$) to matrix equations (\ref{eq:equationG1}) and (\ref{eq:equationR1}) (resp.\ equations (\ref{eq:equationG2}) and (\ref{eq:equationR2})) exist if and only if $z\in[\underline{z}_1^*,\bar{z}_1^*]$ (resp.\ $z\in[\underline{z}_2^*,\bar{z}_2^*])$. 
%
%
Furthermore, by Proposition 2.5 of Ref.\ \cite{Ozawa18a}, we have, for $z_1\in[\underline{z}_1^*,\bar{z}_1^*]$ and $z_2\in[\underline{z}_2^*,\bar{z}_2^*]$, 
\begin{align}
&\spr(G_1(z_1)) = \underline{\zeta}_2(z_1),\quad \spr(R_1(z_1)) = \bar{\zeta}_2(z_1)^{-1}, \\
&\spr(G_2(z_2)) = \underline{\zeta}_1(z_2),\quad \spr(R_2(z_2)) = \bar{\zeta}_1(z_2)^{-1}. 
\end{align}
%
Define matrix functions $C_1(z_1,X)$ and $C_2(X,z_2)$ as 
\begin{equation}
C_1(z_1,X) = A_{*,0}^{(1)}(z_1) + A_{*,1}^{(1)}(z_1) X,\quad 
C_2(X,z_2) = A_{0,*}^{(2)}(z_2) + A_{1,*}^{(2)}(z_2) X, 
\end{equation}
where $X$ is an $s_0\times s_0$ matrix and, for $i\in\mathbb{H}_+$,
\[
A_{*,i}^{(1)}(z_1) = \sum_{j\in\mathbb{H}} A_{j,i}^{(1)} z_1^j,\quad 
A_{i,*}^{(2)}(z_2) = \sum_{j\in\mathbb{H}} A_{i,j}^{(2)} z_2^j.
\]
Define $\psi_1(z_1)$ and $\psi_2(z_2)$ as
\[
\psi_1(z_1) = \spr(C_1(z_1,G_1(z_1))),\quad 
\psi_2(z_2) = \spr(C_2(G_2(z_2),z_2)). 
\]
%

\begin{figure}[htbp]
\begin{center}
\includegraphics[width=16cm,trim=60 320 0 50]{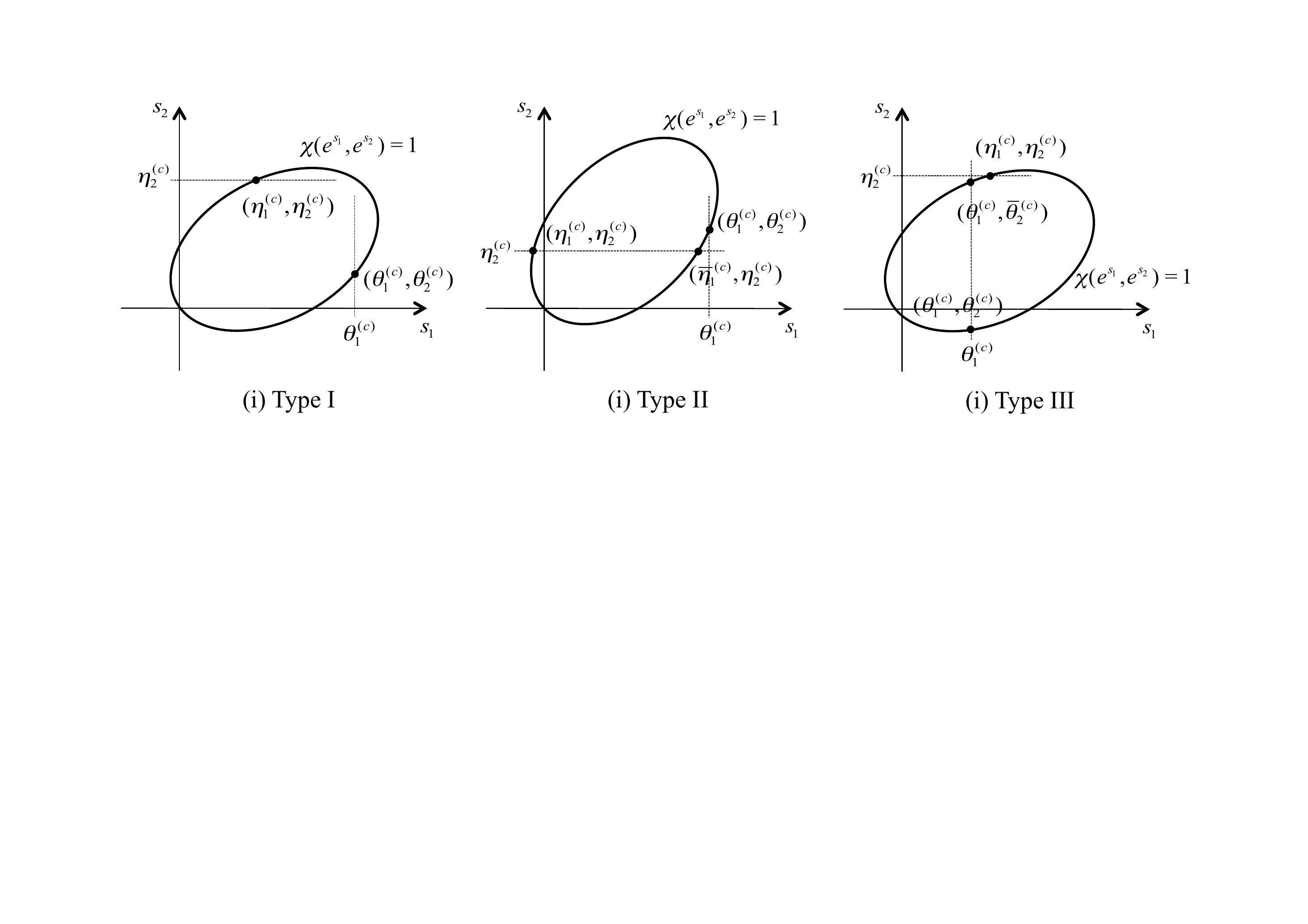} 
\caption{Configuration of $(\theta_1^{(c)},\theta_2^{(c)})$ and $(\eta_1^{(c)},\eta_2^{(c)})$}
\label{fig:configuration}
\end{center}
\end{figure}
%
Next, we introduce points $(\theta_1^{(c)},\theta_2^{(c)})$ and $(\eta_1^{(c)},\eta_2^{(c)})$ on the closed curve $\chi(e^{s_1},e^{s_2})=1$, as follows (see Fig.\ \ref{fig:configuration}):
\begin{align*}
\theta_1^{(c)} = \max\{s_1\in[0,\bar{s}_1^*]: \psi_1(e^{s_1})\le 1\},\quad 
\theta_2^{(c)} = \log \underline{\zeta}_2(e^{\theta_1^{(c)}}), \cr
\eta_2^{(c)} = \max\{s_2\in[0,\bar{s}_2^*]: \psi_2(e^{s_2})\le 1\},\quad 
\eta_1^{(c)} = \log \underline{\zeta}_1(e^{\eta_2^{(c)}}).
\end{align*}
Furthermore, define $\bar{\theta}_2^{(c)}$ and $\bar{\eta}_1^{(c)}$ as
\[
\bar{\theta}_2^{(c)} = \log \bar{\zeta}_2(e^{\theta_1^{(c)}}),\quad
\bar{\eta}_1^{(c)} = \log \bar{\zeta}_1(e^{\eta_2^{(c)}}).
\]
%
%
%
%
By Lemma 2.5 of Ref.\ \cite{Ozawa18a}, we see that $\theta_1^{(c)}$ and $\eta_2^{(c)}$ are always positive. 
%
Since $\bar{\Gamma}$ is convex, we can classify the possible configuration of points $(\theta_1^{(c)},\theta_2^{(c)})$ and $(\eta_1^{(c)},\eta_2^{(c)})$ in the following manner (see Fig.\ \ref{fig:configuration}). 
\begin{align*}
&\mbox{Type I: $\eta_1^{(c)}<\theta_1^{(c)}$ and $\theta_2^{(c)}<\eta_2^{(c)}$},\quad 
\mbox{Type II: $\eta_1^{(c)}<\theta_1^{(c)}$ and $\eta_2^{(c)}\le\theta_2^{(c)}$}, \\
&\mbox{Type III: $\theta_1^{(c)}\le\eta_1^{(c)}$ and $\theta_2^{(c)}<\eta_2^{(c)}$}. 
\end{align*}
Define the directional decay rates, $\xi_1$ and $\xi_2$, of the stationary distribution of the 2d-QBD process as
\begin{align*}
\xi_1 &= - \lim_{n\to\infty} \frac{1}{n} \log\nu_{n,j,k}\quad\mbox{for any $j$ and $k$},\\
\xi_2 &= - \lim_{n\to\infty} \frac{1}{n} \log\nu_{i,n,k}\quad\mbox{for any $i$ and $k$}. 
\end{align*}
By Lemma 2.6 of Ref.\ \cite{Ozawa18a}, $\xi_1$ and $\xi_2$ are given as follows.
%
%
\begin{align}
&(\xi_1,\xi_2) = \left\{ 
\begin{array}{ll}
(\theta_1^{(c)},\eta_2^{(c)}), & \mbox{Type I}, \cr
(\bar{\eta}_1^{(c)},\eta_2^{(c)}), & \mbox{Type II}, \cr
(\theta_1^{(c)},\bar{\theta}_2^{(c)}), & \mbox{Type III}.
\end{array} \right.
\end{align}
%
The directional geometric decay rates $r_1$ in $x_1$-coordinate and $r_2$ in $x_2$-coordinate are given as $r_1=e^{\xi_1}$ and $r_2=e^{\xi_2}$.

%
%
\subsection{Main results of Ref.\ \cite{Ozawa18a}}

We assume the following technical condition. 
\begin{assumption} \label{as:G1_eigen_z1max}
All the eigenvalues of $G_1(r_1)$ are distinct. Also, those of $G_2(r_2)$ are distinct.
\end{assumption}

%
%
%

The exact asymptotic formulae $h_1(k)$ in $x_1$-coordinate and $h_2(k)$ in $x_2$-coordinate 
%
%
are given as follows. 
\begin{theorem}[Theorem 2.1 of Ref.\ \cite{Ozawa18a}] \label{th:mainresults}
Under Assumptions \ref{as:irreducibleYn} through \ref{as:G1_eigen_z1max}, in the case of Type I, the exact asymptotic formulae are given as
\begin{align}
h_1(k) 
&= \left\{ \begin{array}{ll}
r_1^{-k}, & \psi_1(\bar{z}_1^*)>1, \cr
k^{-\frac{1}{2}(2 l_1-1)} (\bar{z}_1^*)^{-k}, & \psi_1(\bar{z}_1^*)=1, \cr
k^{-\frac{1}{2}(2 l_1+1)} (\bar{z}_1^*)^{-k}, & \psi_1(\bar{z}_1^*)<1,
\end{array} \right. \quad
%
h_2(k) 
= \left\{ \begin{array}{ll}
r_2^{-k}, & \psi_2(\bar{z}_2^*)>1, \cr
k^{-\frac{1}{2}(2 l_2-1)} (\bar{z}_2^*)^{-k}, & \psi_2(\bar{z}_2^*)=1, \cr
k^{-\frac{1}{2}(2 l_2+1)} (\bar{z}_2^*)^{-k}, & \psi_2(\bar{z}_2^*)<1,
\end{array} \right. 
\end{align}
where $l_1$ and $l_2$ are some positive integers. In the case of Type II, they are given as 
\begin{align}
h_1(k) 
&= \left\{ \begin{array}{ll}
r_1^{-k}, & \eta_2^{(c)}<\theta_2^{(c)}, \cr
k\,r_1^{-k}, & \eta_2^{(c)}=\theta_2^{(c)}\ \mbox{and}\ \psi_1(\bar{z}_1^*)>1, \cr
(\bar{z}_1^*)^{-k}, & \eta_2^{(c)}=\theta_2^{(c)}\ \mbox{and}\ \psi_1(\bar{z}_1^*)=1, \cr
k^{-\frac{1}{2}} (\bar{z}_1^*)^{-k}, & \eta_2^{(c)}=\theta_2^{(c)}\ \mbox{and}\ \psi_1(\bar{z}_1^*)<1,
\end{array} \right. \quad
%
h_2(k) =r_2^{-k}. 
\end{align}
In the case of Type III, they are given as 
\begin{align}
h_1(k) &=r_1^{-k}, \quad
%
h_2(k) 
= \left\{ \begin{array}{ll}
r_2^{-k}, & \theta_1^{(c)}<\eta_1^{(c)}, \cr
k\,r_2^{-k}, & \theta_1^{(c)}=\eta_1^{(c)}\ \mbox{and}\ \psi_2(\bar{z}_2^*)>1, \cr
(\bar{z}_2^*)^{-k}, & \theta_1^{(c)}=\eta_1^{(c)}\ \mbox{and}\ \psi_2(\bar{z}_2^*)=1, \cr
k^{-\frac{1}{2}} (\bar{z}_2^*)^{-k}, & \theta_1^{(c)}=\eta_1^{(c)}\ \mbox{and}\ \psi_2(\bar{z}_2^*)<1. 
\end{array} \right. \quad
\end{align}
\end{theorem}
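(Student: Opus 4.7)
The strategy is to extract coefficient asymptotics of a vector-valued generating function via complex singularity analysis. Treating $\{\bY_n\}$ as the QBD $\{\bY^{(1)}_n\}$ with level $X_{2,n}$ and phase $(X_{1,n},J_n)$, the boundary distributions $\bnu_{\cdot,0}$ and $\bnu_{\cdot,1}$ determine all of $\bnu_{\cdot,l}$, $l\ge 2$, through $R^{(1)}$; substituting into the stationary equations for $\bnu_{k,0}$ and $\bnu_{k,1}$, multiplying by $z^k$ and summing over $k\ge 0$ produces, for $\bnu_0^*(z):=\sum_{k\ge 0}\bnu_{k,0}z^k$, a vector equation of the form
\begin{equation*}
\bnu_0^*(z)\bigl(I-C_1(z,G_1(z))\bigr)=\bg(z),
\end{equation*}
where $\bg(z)$ collects finitely many boundary contributions together with a transferred term involving $\bnu_{0,\cdot}^*$ that becomes dominant in Types~II and III. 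Inverting this relation yields an explicit formula for $\bnu_0^*(z)$ whose analytic structure is governed by the singularities of $C_1$, $G_1$ and $\bg$.

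The heart of the proof is the classification of the dominant singularity of $\bnu_0^*(z)$ on $\{|z|=r_1\}$. Two intrinsic mechanisms are available: a pole wherever $\det(I-C_1(z,G_1(z)))=0$, which by the Perron--Frobenius structure and Assumption~\ref{as:G1_eigen_z1max} is equivalent to $\psi_1(z)=1$; and a square-root branch point inherited from $G_1(z)$ at $z=\bar{z}_1^*$, where $\underline{\zeta}_2(z)$ and $\bar{\zeta}_2(z)$ coalesce and $G_1$ admits a $(\bar{z}_1^*-z)^{1/2}$ Puiseux expansion coming from the smooth convex curve $\chi(z_1,z_2)=1$. In Type~I the dominant singularity lies among these two: either a simple pole at the smallest real zero of $\psi_1(z)=1$ in $(1,\bar{z}_1^*)$ when $\psi_1(\bar{z}_1^*)>1$, or, when $r_1=\bar{z}_1^*$, a pole of order $l_1$ interacting with the branch if $\psi_1(\bar{z}_1^*)=1$, or a pure square-root branch if $\psi_1(\bar{z}_1^*)<1$; here $l_1$ denotes the pole order of $(I-C_1(z,G_1(z)))^{-1}$ at $\bar{z}_1^*$. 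In Types~II and III a further singularity enters through $\bg(z)$, imported from the symmetric generating function at $z=e^{\eta_1^{(c)}}$ (resp.\ $z=e^{\theta_1^{(c)}}$ in the dual direction); the subcases in the statement correspond to whether this imported pole is strictly inside $\bar{z}_1^*$, coincides with the pole arising from $\psi_1=1$ (yielding a double pole and hence the $k\,r_1^{-k}$ factor), or coincides with the branch point.

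Once the local singular expansion of $\bnu_0^*(z)$ on $\tilde{\Delta}_{r_1}$ is in hand, standard Darboux/transfer theorems produce each asymptotic form in the theorem: a simple pole yields $r_1^{-k}$, a double pole yields $k\,r_1^{-k}$, and the various combinations of a pole of order $l_1$ with the square-root branch produce the $k^{-(2l_1\mp 1)/2}(\bar{z}_1^*)^{-k}$ exponents. The asymptotics of $\bnu_{0,k}$ follow by the fully symmetric argument with indices $1$ and $2$ exchanged, using $R^{(2)}$, $G_2$, $C_2$ and $\psi_2$. The principal obstacle is the singularity classification itself: first, proving analyticity of $G_1(z)$ on a deleted $\tilde{\Delta}_{\bar{z}_1^*}$-neighbourhood together with the precise $(\bar{z}_1^*-z)^{1/2}$ expansion (via the implicit function theorem applied to $\chi(z_1,z_2)=1$ at the smooth boundary point of $\bar{\Gamma}$); and second, when $r_1=\bar{z}_1^*$, disentangling the interaction between the pole of $(I-C_1(z,G_1(z)))^{-1}$ and the branch of $G_1$ so as to read off the correct order $l_1$. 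Assumption~\ref{as:G1_eigen_z1max}, which ensures that $G_1(r_1)$ has distinct eigenvalues, is precisely what permits a clean local diagonalisation of $G_1(z)$ near $r_1$ and makes this Jordan-type analysis tractable.
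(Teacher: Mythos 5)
Your architecture --- a functional equation $\bvarphi_1(z)\bigl(I-C_1(z,G_1(z))\bigr)=\bg_1(z)$ for the boundary generating function, classification of the dominant singularity as a zero of $\det\bigl(I-C_1(z,G_1(z))\bigr)$ (equivalently $\psi_1(z)=1$), a square-root branch point of $G_1$ at $\bar{z}_1^*$, or a singularity imported through the other boundary generating function, followed by a transfer theorem on a $\tilde{\Delta}_{r_1}$ domain --- is exactly the route taken in Ref.~\cite{Ozawa18a} and supported by the lemmas proved in this paper. As a proof, however, it has genuine gaps. The most serious is exactness: the theorem asserts $\lim_k \bnu_{k,0}/h_1(k)=\bc_1\ne\bzero$, so locating the singularity and its type only yields upper bounds; you must also show that the leading singular coefficient is nonzero. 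That is where most of the work lies --- Proposition 5.6 computes $\tilde{G}_{1,1}=\tilde{\alpha}_{s_0,1}N_1(\bar{z}_1^*)\bv^{R_1}(\bar{z}_1^*)\bu_{s_0}(\bar{z}_1^*)$ and shows it is nonzero, Proposition 5.7 does the same for the coefficients of $\bvarphi_2(G_1(z))$, and Lemmas 5.4--5.5 together with Proposition 5.10 (positivity of $\bu^{C_1}$ and of the invariant measure of $A_*^{(1)}(z_0)$) establish positivity of the resulting coefficient vectors. Your sketch never addresses nonvanishing. Relatedly, your identification of $l_1$ as ``the pole order of $(I-C_1(z,G_1(z)))^{-1}$ at $\bar{z}_1^*$'' is wrong: under the standing assumptions that inverse has at worst a simple pole in the variable $\zeta=(\bar{z}_1^*-z)^{1/2}$; $l_1$ is the index of the first non-vanishing half-integer-power Puiseux coefficient of $\bvarphi_1$ itself, left unspecified precisely because cancellation in the numerator $\bg_1$ cannot be excluded.

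Two further steps are named but not actually secured. The ``transferred term'' is $\bvarphi_2(G_1(z))=\sum_{j\ge1}\bnu_{0,j}G_1(z)^j$, a power series evaluated at a \emph{matrix}; controlling it requires Lemma 3.2 (absolute convergence of $\bphi(X)$ when $\spr(X)<r_2$) and, to extract its singularity in Type II, the spectral decomposition $\bvarphi_2(G_1(z))=\sum_j\bvarphi_2(\alpha_j(z))\bv_j(z)\bu_j(z)$ --- this reduction of matrix functional calculus to scalar evaluations at the eigenvalues is the real role of Assumption \ref{as:G1_eigen_z1max}, and it is how the double pole (hence $k\,r_1^{-k}$) arises when the pole of $\bvarphi_2$ at $r_2$ meets the zero of $\psi_1(z)-1$. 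Finally, applying Darboux/transfer requires analytic continuation of $\bvarphi_1$ past the circle $|z|=r_1$ everywhere except the positive real point; this rests on the strict domination results $\spr(C(z,w))<\spr(C(|z|,|w|))$ and $\spr(C_1(z,G_1(z)))<\spr(C_1(|z|,G_1(|z|)))$ for $z\ne|z|$ (Propositions 4.1 and 5.2 here), which your proposal does not mention and which do not follow from convexity of $\bar{\Gamma}$ alone.
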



%

%
%
\section{An example} \label{sec:example}

We consider the same queueing model as that used in Ozawa \cite{Ozawa13}. 
It is a single-server two-queue model in which the server visits the queues alternatively, serves one queue (denoted by Q$_1$) according to a 1-limited service and the other queue (denoted by Q$_2$) according to an exhaustive-type $K$-limited service (see Fig.\ \ref{fig:two_queue}). 
Customers in class 1 arrive at Q$_1$ according to a Poisson process with intensity $\lambda_1$, those in class 2 arrive at Q$_2$ according to another Poisson process with intensity $\lambda_2$, and they are mutually independent. 
Service times of class-1 customers are exponentially distributed with mean $1/\mu_1$, those of class-2 customers are also exponentially distributed with mean $1/\mu_2$, and they are mutually independent. 
The arrival processes and service times are also mutually independent. We define $\rho_i,\,i=1,2$, $\lambda$ and $\rho$ as $\rho_i=\lambda_i/\mu_i$, $\lambda=\lambda_1+\lambda_2$ and $\rho=\rho_1+\rho_2$, respectively. 
We refer to this model as a $(1,K)$-limited service model.

\begin{figure}[htbp]
\begin{center}
\includegraphics[width=16cm,trim=0 400 0 30]{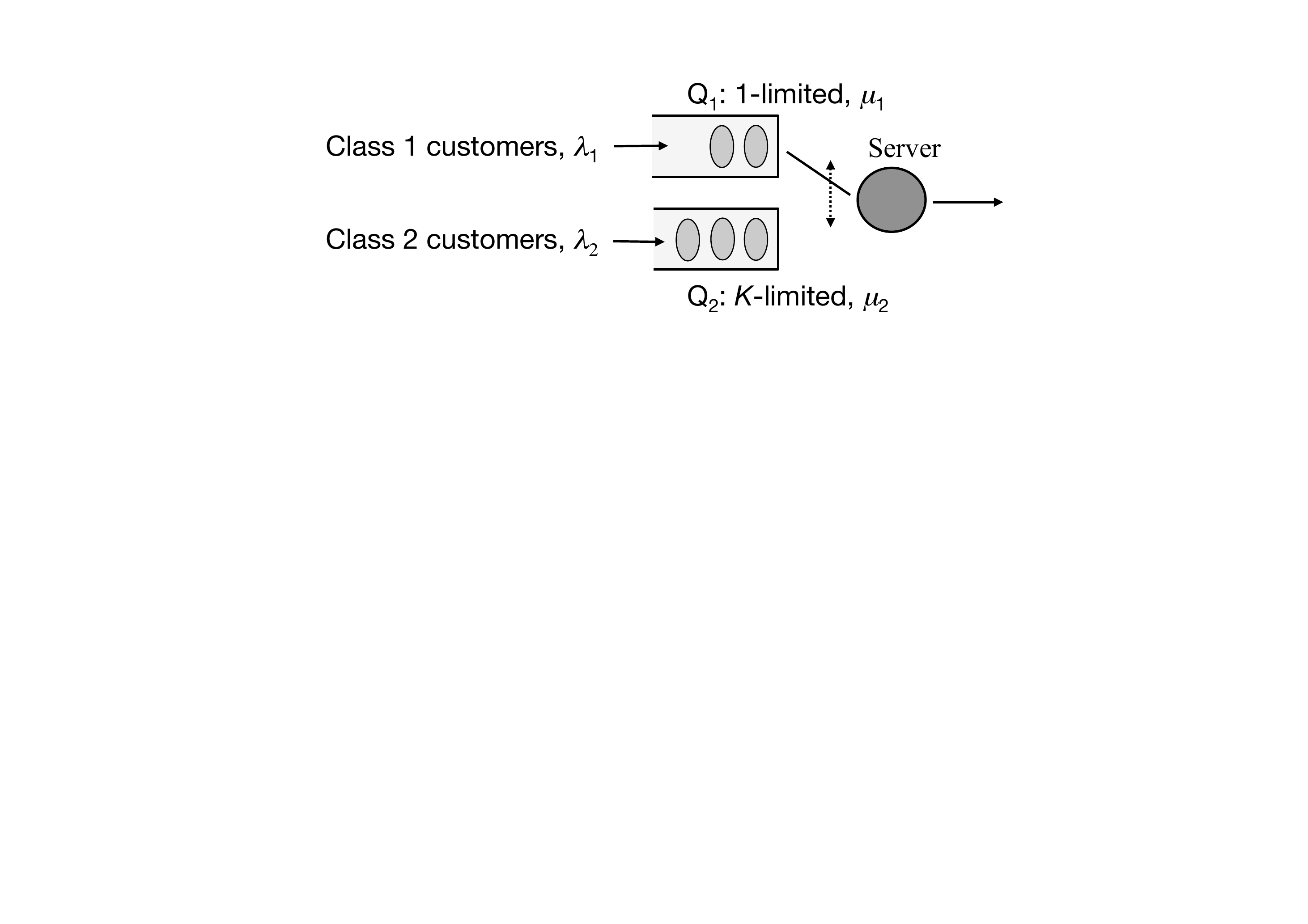} 
\caption{Single server two-queue model}
\label{fig:two_queue}
\end{center}
\end{figure}

Let $X_1(t)$ be the number of customers at Q$_1$ at time $t$, $X_2(t)$ that of customers at Q$_2$ and $J(t) \in S_0=\{0, 1, ..., K\}$ the server state. 
When $X_1(t)=X_2(t)=0$, $J(t)$ takes one of the states in $S_0$ at random; 
When $X_1(t)\ge 1$ and $X_2(t)=0$, it also takes one of the states in $S_0$ at random; 
When $X_1(t)=0$ and $X_2(t)\ge 1$, it takes the state of $0$ or $1$ at random if the server is serving the $K$-th customer in class 2 during a visit of the server at Q$_2$ and takes the state of $j\in\{2, ..., K\}$ if the server is serving the $(K-j+1)$-th customer in class 2; 
When $X_1(t)\ge 1$ and $X_2(t)\ge 1$, it takes the state of $0$ if the server is serving a class-1 customer and takes the state of $j\in\{1, ..., K\}$ if the server is serving the $(K-j+1)$-th customer in class 2 during a visit of the server at Q$_2$. 
The process $\{(X_1(t), X_2(t), J(t))\}$ becomes a continuous-time 2d-QBD process on the state space $\calS=\mathbb{Z}_+^2\times S_0$. By uniformization with parameter $\nu=\lambda+\mu_1+\mu_2$, we obtain the corresponding discrete-time 2d-QBD process, $\{(X_{1,n}, X_{2,n}, J_n)\}$, governed by the following $(K+1)\times(K+1)$ block matrices:
\begin{align*}
&A_{1,0} = \frac{\lambda_1}{\nu} I,\quad 
A_{0,1} = \frac{\lambda_2}{\nu} I,\quad 
A_{1,1} = A_{1,-1} = A_{-1,1} = A_{-1,-1} = O, \\
&A_{-1,0} = \frac{\mu_1}{\nu} \begin{pmatrix}
0 & \cdots & 0 & 1 \cr
0 & \cdots & 0 & 0 \cr
\vdots & \ddots &  \vdots & \vdots \cr
0 & \cdots & 0 & 0 \cr \end{pmatrix},\quad
A_{0,-1} = \frac{\mu_2}{\nu} \begin{pmatrix}
0 & & & \cr
1 & 0 & & \cr
 & \ddots & \ddots & \cr
 & & 1 & 0 \end{pmatrix}, \\
&A_{0,0} = I + \frac{1}{\nu} \begin{pmatrix}
-(\lambda+\mu_1) & & & \cr
 & -(\lambda+\mu_2) & & \cr
 & & \ddots & \cr
 & & & -(\lambda+\mu_2) \end{pmatrix}, 
\end{align*}
\begin{align*}
&A^{(1)}_{1,0} = \frac{\lambda_1}{\nu} I,\quad 
A^{(1)}_{0,0} = \left(1-\frac{(\lambda+\mu_1)}{\nu}\right) I,\quad 
A^{(1)}_{1,1} = A^{(1)}_{-1,1} = O, \\
&A^{(1)}_{0,1} = \frac{\lambda_2}{\nu} \begin{pmatrix}
1 & 0 & \cdots & 0\cr
1 & 0 & \cdots & 0\cr
\vdots & \vdots & \ddots & \vdots \cr
1 & 0 & \cdots & 0 \end{pmatrix},\quad
A^{(1)}_{-1,0} = \frac{\mu_1}{\nu(K+1)} \begin{pmatrix}
1 & \cdots & 1\cr
\vdots & \ddots & \vdots \cr
1 & \cdots & 1 \end{pmatrix}, 
\end{align*}
\begin{align*}
&A^{(2)}_{0,1} = \frac{\lambda_2}{\nu} I,\quad 
A^{(2)}_{0,0} = \left(1-\frac{(\lambda+\mu_2)}{\nu}\right) I,\quad 
A^{(2)}_{1,1} = A^{(2)}_{1,-1} = O, \\
&A^{(2)}_{1,0} = \frac{\lambda_1}{\nu} \begin{pmatrix}
0 & 1 & 0 & & \cr
0 & 1 & 0 & & \cr
0 & 0 & 1 & & \cr
 & & \ddots & \ddots & \cr
 & & & 0 & 1 \end{pmatrix},\quad 
A^{(2)}_{0,-1} = \frac{\mu_2}{\nu} \begin{pmatrix}
0 & 0 & 0 & \cdots & 0 & 1 \cr
0 & 0 & 0 &  \cdots & 0 & 1 \cr
1/2 & 1/2 & 0 & \cdots & 0 & 0 \cr
0 & 0 & 1 & \cdots & 0 & 0 \cr
& & & \ddots & \ddots & & \cr
0 & 0 & 0 & \cdots & 1 & 0  \end{pmatrix}, 
\end{align*}
\begin{align*}
&A^{(0)}_{1,0} = \frac{\lambda_1}{\nu} I,\quad 
A^{(0)}_{1,1} = O,\quad
A^{(0)}_{0,0} = \left(1-\frac{\lambda}{\nu}\right) I,\quad 
A^{(0)}_{0,1} = \frac{\lambda_2}{\nu} \begin{pmatrix}
0 & \cdots & 0 & 1\cr
\vdots & \ddots & \vdots & \vdots \cr
0 & \cdots & 0 & 1  \cr
0 & \cdots & 0 & 1 \end{pmatrix}. 
\end{align*}

Here we note that, because of the form of $A_{-1,0}$, only the $(K+1)$-th column of $G_2(w)$ is nonzero for all $w\in[\underline{z}_2^*,\bar{z}_2^*]$ and the value of $0$ is always the eigenvalue of $G_2(w)$ with algebraic multiplicity $K$. Hence, $G_2(r_2)$ does not satisfy Assumption \ref{as:G1_eigen_z1max} but, in this case, we can give $G_2(w)$ in a specific form. 
%
$G_2(w)$ as well as $G_1(z)$ can analogously be defined for a complex variable (see Section 4 of Ref.\ \cite{Ozawa18a}). For $z,w\in\mathbb{C}$, define a matrix function $L(z,w)$ as 
\begin{equation}
L(z,w) = z w (C(z,w)-I) = z A_{*,-1}(z)+z(A_{*,0}(z)-I) w+z A_{*,1}(z) w^2, 
\label{eq:Lzw_definition}
\end{equation}
and denote by $\phi(z,w)$ the determinant of $L(z,w)$, i.e., $\phi(z,w)=\det L(z,w)$. $L(z,w)$ is a $(K+1)\times(K+1)$ matrix polynomial in $w$ with degree $2$ and every entry of $L(z,w)$ is a polynomial in $z$ and $w$. 
$\phi(z,w)$ is also a polynomial in $z$ and $w$. 
%
For $w\in\mathbb{C}[\underline{z}_2^*,\bar{z}_2^*]$, let $\alpha(w)$ be the solution to $\phi(z,w)=0$ that corresponds to $\underline{\zeta}_1(w)$ when $w\in[\underline{z}_2^*,\bar{z}_2^*]$. 
Let $\bv(w)$ be the column vector satisfying $L(\alpha(w),w) \bv(w) = \bzero$. This $\bv(w)$ is the right eigenvector of $G_2(w)$ with respect to the eigenvalue $\alpha(w)$. If $[\bv(w)]_{K+1}\ne 0$, $G_2(w)$ is given as
\begin{equation}
G_2(w) = \begin{pmatrix}  \bzero & \cdots & \bzero & \frac{\alpha(w)}{[\bv(w)]_{K+1}}\,\bv(w) \end{pmatrix}, 
\end{equation}
and it is decomposed as
\begin{equation}
G_2(w) = V_2(w) J_2(w) V_2(w)^{-1}, 
\end{equation}
where 
\begin{align}
&J_2(w) = \diag(0,\dots,0,\alpha(w)),\\
&V_2(w) = \begin{pmatrix}  \be_1 & \be_2 & \cdots & \be_K & \frac{1}{[\bv(w)]_{K+1}}\,\bv(w) \end{pmatrix}, \\
&V_2(w)^{-1}=\begin{pmatrix}  \be_1 & \be_2 & \cdots & \be_K & 2 \be_{K+1}-\frac{1}{[\bv(w)]_{K+1}}\,\bv(w) \end{pmatrix} 
\end{align}
and, for $i\in\{1,2,...,K+1\}$, $\be_i$ is the $i$-th unit vector of dimension $K+1$. 
For any $w'\in\mathbb{C}$, if $\alpha(w)$ is analytic at $w=w'$, $G_2(w)$ is also entry-wise analytic at $w=w'$. Hence, the results of Lemma 4.7 of Ref.\ \cite{Ozawa18a} 
hold for this $G_2(w)$. 
Furthermore, because of the decomposition of $G_2(w)$ above, the results in Section 5 of Ref.\ \cite{Ozawa18a} 
also hold for $\bvarphi_2(w)=\sum_{j=1}^\infty \bnu_{0,j} w^j$. 
Therefore, in the $(1,K)$-limited service model, the results of Theorem  \ref{th:mainresults} hold for the exact asymptotic formula $h_2(k)$.

In Tables \ref{tab:example1} and \ref{tab:example2}, we show numerical examples of the geometric decay rates $r_1$ and $r_2$ in the $(1,K)$-limited service model, where the value of $K$ is varied. 
We have numerically verified that $G_1(r_1)$ satisfies Assumption \ref{as:G1_eigen_z1max} for all the numerical examples. 
In both the tables, the type of configuration of points $(\theta_1^{(c)},\theta_2^{(c)})$ and $(\eta_1^{(c)},\eta_2^{(c)})$ is Type I for all the values of $K$. We see from the tables that if $K\ge 6$, the exact asymptotic function $h_1(k)$ is geometric and if $K< 6$, it is not. 
On the other hand, in the case of Table \ref{tab:example1}, the exact asymptotic function $h_2(k)$ is not geometric for any value of $K$; in the case of Table \ref{tab:example2}, if $K\le 2$, $h_2(k)$ is geometric and $K>2$, it is not.

%
\begin{table}[htbp]
\caption{Directional geometric asymptotic rates ($\lambda_1=0.3$, $\lambda_2=0.3$, $\mu_1=1$, $\mu_2=1$, $\rho=0.6$)}
\begin{center}
\begin{tabular}{c|c|cc|cc|cc}
$K$ & Type & $a_1^{\{1,2\}}$ & $a_2^{\{1,2\}}$ & $\psi(\bar{z}_1^*)-1$ & $\psi(\bar{z}_2^*)-1$ & $r_1$ & $r_2$ \cr \hline
1 & I & $-0.0769$ & $-0.0769$ & $-$ & $-$ & $1.968$ & $1.968$ \cr
2 & I & $-0.0128$ & $-0.141$ & $-$ & $-$ & $1.706$ & $2.698$ \cr
3 & I & $0.0192$ & $-0.173$ & $-$ & $-$ & $1.674$ & $3.171$ \cr
4 & I & $0.0385$ & $-0.192$ & $-$ & $-$ & $1.668$ & $3.464$ \cr
5 & I & $0.0513$ & $-0.205$ & $-$ & $-$ & $1.667$ & $3.659$ \cr
6 & I & $0.0604$ & $-0.214$ & $+$ & $-$ & $1.667$ & $3.797$ \cr
7 & I & $0.0673$ & $-0.221$ & $+$ & $-$ & $1.667$ & $3.899$ \cr
8 & I & $0.0726$ & $-0.226$ & $+$ & $-$ & $1.667$ & $3.978$ \cr
10 & I & $0.0804$ & $-0.234$ & $+$ & $-$ & $1.667$ & $4.091$ \cr
20 & I & $0.0971$ & $-0.251$ & $+$ & $-$ & $1.667$ & $4.308$ \cr
50 & I & $0.108$ & $-0.262$ & $+$ & $-$ & $1.667$ & $4.421$
\end{tabular}
\end{center}
\label{tab:example1}
\end{table}%

\begin{table}[htbp]
\caption{Directional geometric asymptotic rates ($\lambda_1=0.24$, $\lambda_2=0.7$, $\mu_1=1.2$, $\mu_2=1$, $\rho=0.9$)}
\begin{center}
\begin{tabular}{c|c|cc|cc|cc}
$K$ & Type & $a_1^{\{1,2\}}$ & $a_2^{\{1,2\}}$ & $\psi(\bar{z}_1^*)-1$ & $\psi(\bar{z}_2^*)-1$ & $r_1$ & $r_2$ \cr \hline
1 & I & $-0.0973$ & $0.0492$ & $-$ & $+$ & $3.646$ & $1.116$ \cr
2 & I & $-0.0360$ & $-0.00187$ & $-$ & $+$ & $1.844$ & $1.116$ \cr
3 & I & $-0.00665$ & $-00263$ & $-$ & $-$ & $1.183$ & $1.135$ \cr
4 & I & $0.0105$ & $-0.0406$ & $-$ & $-$ & $1.102$ & $1.273$ \cr
5 & I & $0.0218$ & $-0.0500$ & $-$ & $-$ & $1.095$ & $1.392$ \cr
6 & I & $0.0298$ & $-0.0567$ & $+$ & $-$ & $1.095$ & $1.481$ \cr
7 & I & $0.0358$ & $-0.0617$ & $+$ & $-$ & $1.095$ & $1.549$ \cr
8 & I & $0.0404$ & $-0.0655$ & $+$ & $-$ & $1.095$ & $1.603$ \cr
10 & I & $0.0470$ & $-0.0710$ & $+$ & $-$ & $1.095$ & $1.682$ \cr
20 & I & $0.0611$ & $-0.0828$ & $+$ & $-$ & $1.095$ & $1.859$ \cr
50 & I & $0.0702$ & $-0.0903$ & $+$ & $-$ & $1.095$ & $1.971$
\end{tabular}
\end{center}
\label{tab:example2}
\end{table}%
%

\begin{remark} \label{re:G1_extended}
Consider a general 2d-QBD process. The results of Theorem \ref{th:mainresults} also hold for the case where several columns of $G_1(z)$ or $G_2(w)$ are zero. Here we only consider the case where several columns of $G_2(w)$ are zero. 
For simplicity, assume that, for some $k_0\in\{1,2,...,s_0-1\}$ and for any $w\in[\underline{z}_2^*,\bar{z}_2^*]$, the first $k_0$ columns of $G_2(w)$ are always zero. Further assume that the value of $0$ is the eigenvalue of $G_2(r_2)$ with algebraic multiplicity $k_0$ and the other $s_0-k_0$ eigenvalues of $G_2(r_2)$ are nonzero and distinct. 
$(1,K)$-limited service models in which the Poisson arrival processes are replaced with Markovian arrival processes are like this. 

For $w\in\mathbb{C}[\underline{z}_2^*,\bar{z}_2^*]$, let $\alpha_1(w)$, $\alpha_2(w)$, ..., $\alpha_{s_0}(w)$ be the solutions to $\phi(z,w)=0$, counting multiplicity, that satisfy $|\alpha_k(w)|\le \underline{\zeta}_1(w)$, $k=1,2,...,s_0$. 
By Lemma 4.3 of Ref.\ \cite{Ozawa18a}, these solutions are the eigenvalues of $G_2(w)$. Without loss of generality, we assume that $\alpha_k(w)\equiv 0$ for $k\in\{1,2,...,k_0\}$ and $\alpha_{s_0}(w)$ corresponds to $\underline{\zeta}_1(w)$ when $w\in[\underline{z}_2^*,\bar{z}_2^*]$. For $k\in\{k_0+1,k_0+2,...,s_0\}$, let $\bv_k(w)$ be the column vector satisfying $L(\alpha_k(w),w) \bv_k(w) = \bzero$. Each $\bv_k(w)$ is the right eigenvector of $G_2(w)$ with respect to the eigenvalue $\alpha_k(w)$. 
Define $s_0\times s_0$ matrix functions $J_2(w)$ and $V_2(w)$ as
\begin{align}
&J_2(w) = \diag(0,\dots,0,\alpha_{k_0+1}(w),\alpha_{k_0+2}(w),...,\alpha_{s_0}(w)),\\
&V_2(w) = \begin{pmatrix}  \be_1 & \be_2 & \cdots & \be_{k_0} & \bv_{k_0+1}(w) & \bv_{k_0+2}(w) & \cdots & \bv_{s_0}(w)\end{pmatrix}, 
\end{align}
If $\alpha_{k_0+1}(w), \alpha_{k_0+2}(w), ..., \alpha_{s_0}(w)$ are distinct, $V_2(w)$ is nonsingular and $G_2(w)$ is given as
\begin{equation}
G_2(w) = V_2(w) J_2(w) V_2(w)^{-1}. 
\end{equation}
Therefore, the results of Lemma 4.7 of Ref.\ \cite{Ozawa18a} 
hold for $G_2(w)$ and those in Section 5 of Ref.\ \cite{Ozawa18a} 
hold for $\bvarphi_2(w)$. As a result, the exact asymptotic formula $h_2(k)$ is given by Theorem \ref{th:mainresults}. 
\end{remark}

%
%
\section{Additional proofs for Ref.\ \cite{Ozawa18a}} \label{sec:proofs}

In this section, we give proofs of some lemmas and propositions appeared in Ref.\ \cite{Ozawa18a} as well as derivation of some coefficient vectors also appeared in Section 5 of Ref.\ \cite{Ozawa18a}.

%
%
\subsection{Proof of Lemma 2.2 of Ref.\ \cite{Ozawa18a}} \label{sec:proof_barGamma_bounded}

\noindent\textbf{Lemma 2.2 of Ref.\ \cite{Ozawa18a}.} 
{\it 
Under Assumption \ref{as:Akl_irreducible}, $\bar{\Gamma}$ is bounded. 
}

\begin{proof}
For $n\ge 1$, $j\in S_0$ and $(s_1,s_2)\in\mathbb{R}^2$, $C(e^{s_1},e^{s_2})^n$ satisfies 
\begin{align}
&\big[ C(e^{s_1},e^{s_2})^n \big]_{jj} 
= \sum_{\bk_{(n)}\in\mathbb{H}^n} \sum_{\bl_{(n)}\in\mathbb{H}^n} \big[ A_{k_1,l_1} A_{k_2,l_2} \cdots A_{k_n,l_n} \big]_{jj}\,e^{s_1\sum_{p=1}^n k_p+s_2\sum_{p=1}^n l_p},
\label{eq:Cs1s2n}
\end{align}
where $\bk_{(n)}=(k_1,k_2,...,k_n)$ and $\bl_{(n)}=(l_1,l_2,...,l_n)$.
Consider the Markov chain $\{\tilde{\bY}_n\}=\{(\tilde{X}_{1,n},\tilde{X}_{2,n},\tilde{J}_n)\}$ governed by $\{A_{k,l}:k,l\in\mathbb{H}\}$ (see Subsection \ref{sec:tdQBD}) and assume that $\{\tilde{\bY}_n\}$ starts from the state $(0,0,j)$. 
Since $\{\tilde{\bY}_n\}$ is irreducible and aperiodic, there exists $n_0\ge 1$ such that, for every $j\in S_0$, $\mathbb{P}(\tilde{\bY}_{n_0}=(1,0,j)\,|\,\tilde{\bY}_0=(0,0,j))>0$, $\mathbb{P}(\tilde{\bY}_{n_0}=(0,1,j)\,|\,\tilde{\bY}_0=(0,0,j))>0$, $\mathbb{P}(\tilde{\bY}_{n_0}=(-1,0,j)\,|\,\tilde{\bY}_0=(0,0,j))>0$ and $\mathbb{P}(\tilde{\bY}_{n_0}=(0,-1,j)\,|\,\tilde{\bY}_0=(0,0,j))>0$. 
This implies that, for some $\bk_{1,(n_0)}^{(j)}$, $\bl_{1,(n_0)}^{(j)}$, $\bk_{2,(n_0)}^{(j)}$, $\bl_{2,(n_0)}^{(j)}$, $\bk_{3,(n_0)}^{(j)}$, $\bl_{3,(n_0)}^{(j)}$, $\bk_{4,(n_0)}^{(j)}$ and $\bl_{4,(n_0)}^{(j)}$ in $\mathbb{H}^{n_0}$, we have 
\begin{align*}
&b_{1,j}=\big[ A_{k_{1,1}^{(j)},l_{1,1}^{(j)}} A_{k_{1,2}^{(j)},l_{1,2}^{(j)}} \cdots A_{k_{1,n_0}^{(j)},l_{1,n_0}^{(j)}} \big]_{jj}>0,\quad \sum_{p=1}^{n_0} k_{1,p}^{(j)}=1,\quad \sum_{p=1}^{n_0} l_{1,p}^{(j)} =0,\\
&b_{2,j}=\big[ A_{k_{2,1}^{(j)},l_{2,1}^{(j)}} A_{k_{2,2}^{(j)},l_{2,2}^{(j)}} \cdots A_{k_{2,n_0}^{(j)},l_{2,n_0}^{(j)}} \big]_{jj}>0,\quad \sum_{p=1}^{n_0} k_{2,p}^{(j)}=0,\quad \sum_{p=1}^{n_0} l_{2,p}^{(j)} =1,\\
&b_{3,j}=\big[ A_{k_{3,1}^{(j)},l_{3,1}^{(j)}} A_{k_{3,2}^{(j)},l_{3,2}^{(j)}} \cdots A_{k_{3,n_0}^{(j)},l_{3,n_0}^{(j)}} \big]_{jj}>0,\quad \sum_{p=1}^{n_0} k_{3,p}^{(j)}=-1,\quad \sum_{p=1}^{n_0} l_{3,p}^{(j)} =0,\\
&b_{4,j}=\big[ A_{k_{4,1}^{(j)},l_{4,1}^{(j)}} A_{k_{4,2}^{(j)},l_{4,2}^{(j)}} \cdots A_{k_{4,n_0}^{(j)},l_{4,n_0}^{(j)}} \big]_{jj}>0,\quad \sum_{p=1}^{n_0} k_{4,p}^{(j)}=0,\quad \sum_{p=1}^{n_0} l_{4,p}^{(j)} =-1.
\end{align*} 
Hence, the sum of any row of $C(e^{s_1},e^{s_2})^{n_0}$ is greater than or equal to $b^*(e^{s_1}+e^{s_2}+e^{-s_1}+e^{-s_2})$, where
\[
b^* = \min_{j\in S_0} \min_{1\le i\le 4} b_{i,j}>0, 
\]
and we obtain 
\begin{align}
\chi(e^{s_1},e^{s_2})
=\spr(C(e^{s_1},e^{s_2})) 
= \spr(C(e^{s_1},e^{s_2})^{n_0})^{\frac{1}{n_0}}
\ge \left( b^*(e^{s_1}+e^{s_2}+e^{-s_1}+e^{-s_2}) \right)^{\frac{1}{n_0}}, 
\end{align}
where we use the fact that, for a nonnegative square matrix $A=(a_{i,j})$, $\spr(A)\ge \min_i \sum_j a_{i,j}$ (see, for example, Theorem 8.1.22 of Horn and Johnson \cite{Horn85}). This means that $\chi(e^{s_1},e^{s_2})$ is unbounded in any direction, and since $\chi(e^{s_1},e^{s_2})$ is convex in $(s_1,s_2)$, we see that $\bar{\Gamma}$ is a bounded set. 
\end{proof}

%
%
\subsection{Proof of Lemma 2.5 of Ref.\ \cite{Ozawa18a}} 

\noindent\textbf{Lemma 2.5 of Ref.\ \cite{Ozawa18a}.} 
{\it 
If $a_2^{\{1,2\}}<0$, then for some positive constant $c_1$, $\psi_1'(1)=c_1 a_1^{\{1\}}$. 
If $a_1^{\{1,2\}}<0$, then for some positive constant $c_2$, $\psi_2'(1)=c_2 a_1^{\{2\}}$. 
}

\begin{proof}
We prove only the first claim. The second claim can be proved analogously. 
Assume $a_2^{\{1,2\}}<0$. Then, the induced Markov chain $\calL^{\{1\}}$ defined in Subsection \ref{sec:stationary_cond} is positive recurrent and $G_1(1)$ is stochastic.
For $z_1\in[\underline{z}_1^*,\bar{z}_1^*]$, $C_1(z_1,G_1(z_1))$ is a nonnegative matrix and $\psi_1(z_1)$ is its Perron-Frobenius eigenvalue. Let $\bu^{C_1}(z_1)$ and $\bv^{C_1}(z_1)$ be the Perron-Frobenius left and right eigenvectors of $C_1(z_1,G_1(z_1))$, satisfying $\bu^{C_1}(z_1) \bv^{C_1}(z_1)=1$. Since $\calL^{\{1\}}$ is positive recurrent, we have $\bu^{C_1}(1)=c_1 \bpi_{*,0}^{(1)}$ and $\bv^{C_1}(1)=\bone$, where $c_1=(\bpi_{*,0}^{(1)} \bone)^{-1}$ is the normalizing constant. 
The derivative of $\psi_1(z_1)$, $\psi_1'(z_1)$, is given as 
\begin{align}
&\psi_1'(z_1) = \bu^{C_1}(z_1) \left(\frac{d}{d z} C_1(z_1,G_1(z_1)) \right) \bv^{C_1}(z_1), \\
&\frac{d}{d z_1} C_1(z_1,G_1(z_1)) = -z_1^{-2}A_{-1,0}^{(1)}+A_{1,0}^{(1)} + (-z_1^{-2} A_{-1,1}^{(1)}+A_{1,1}^{(1)}) G_1(z_1) + A_{*,1}^{(1)}(z_1) G_1'(z_1). 
\end{align}
Since $G_1(z_1)$ is a solution to equation (\ref{eq:equationG1}) and $G_1(1)$ is stochastic, the derivative of $G_1(z_1)$, $G_1'(z_1)$, satisfies 
\begin{equation}
(-A_{-1,*}+A_{1,*}) \bone + (A_{*,0}+A_{*,1} G_1(1)+A_{*,1}) G_1'(1) \bone = G_1'(1) \bone. 
\label{eq:G1p1}
\end{equation}
Define a matrix $N_1$ as 
\begin{equation}
N_1 = \left( I-(A_{*,0}+A_{*,1} G_1(1)) \right)^{-1}. 
\end{equation}
This $N_1$ is well-defied since $G_1(1)$ is the G-matrix of the transition probability matrix $A_*^{(1)}$ (see Proposition 3.5 of Ozawa \cite{Ozawa13}). Furthermore, $N_1$ satisfies $G_1(1) = N_1 A_{*,-1}$ and $R_*^{(1)} = A_{*,1} N_1$.
From equation (\ref{eq:G1p1}), we, therefore, obtain
\begin{equation}
G_1'(1) \bone = N_1 (-A_{-1,*}+A_{1,*}) \bone + N_1 A_{*,1} G_1'(1) \bone, 
\end{equation}
and this leads us to 
\begin{align}
& G_1'(1) \bone = N_1 \sum_{k=0}^\infty (R_*^{(1)})^k (-A_{-1,*}+A_{1,*}) \bone.
\end{align}
As a result, we obtain 
\begin{align}
\psi_1'(1) &= c_1 \bpi_{*,0}^{(1)} \left( -A_{-1,0}^{(1)}+A_{1,0}^{(1)} -A_{-1,1}^{(1)}+A_{1,1}^{(1)} \right) \bone + c_1 \bpi_{*,0}^{(1)} A_{*,1}^{(1)} G_1'(1) \bone 
= c_1 a_1^{\{1\}},
\end{align}
where we use the fact that $\bpi_{*,1}^{(1)} = \bpi_{*,0}^{(1)} A_{*,1}^{(1)} N_1$, and this completes the proof. 
\end{proof}

%
%
\subsection{Proof of Lemma 3.2 of Ref.\ \cite{Ozawa18a}}

\noindent\textbf{Lemma 3.2 of Ref.\ \cite{Ozawa18a}.} 
{\it 
For $n\in\mathbb{Z}_+$, let $\ba_n$ be a $1\times m$ complex vector and define a vector series $\bphi(w)$ as $\bphi(w)=\sum_{n=0}^\infty \ba_n w^n$. Furthermore, for an $m\times m$ complex matrix $X$, define $\bphi(X)$ as $\bphi(X)=\sum_{n=0}^\infty \ba_n X^n$. 
Assume that $\bphi(w)$ converges absolutely for all $w\in\mathbb{C}$ such that $|w|<r$ for some $r>0$. Then, if $\spr(X)<r$, $\bphi(X)$ also converges absolutely. 
}
%
%
\begin{proof}
We denote by $J_k(\lambda)$ the $k$-dimensional Jordan block of eigenvalue $\lambda$. 
Note that the $n$-th power of $J_k(\lambda)$ is given by
\begin{equation}
J_k(\lambda)^n = \begin{pmatrix}
{}_nC_0\lambda^n & {}_nC_1\lambda^{n-1} & \cdots & {}_nC_{k-1}\lambda^{n-k+1} \cr
&  \ddots & \ddots & & \cr
&  & {}_nC_0\lambda^n & {}_nC_1\lambda^{n-1} \cr
& & & {}_nC_0\lambda^n 
\end{pmatrix}. 
\end{equation}
Without loss of generality, the Jordan canonical form $J$ of $X$ is represented as
\begin{equation}
J 
= T^{-1} X T 
= \begin{pmatrix}
J_{k_1}(\lambda_1) & & & \cr
 & J_{k_2}(\lambda_2) & & \cr
 &  & \ddots & \cr
 & & & J_{k_l}(\lambda_l) 
\end{pmatrix},
\end{equation}
where $T$ is a nonsingular matrix, $\lambda_1,\lambda_2,...,\lambda_l$ are the eigenvalues of $X$ and $k_1,k_2,...,k_l$ are positive integers satisfying $k_1+\cdots+k_l=m$. 
We have 
\begin{align}
\Big( \sum_{n=0}^\infty |\ba_n X^n| \Big)^\top 
&\le \sum_{n=0}^\infty |T^{-1}|^\top |J|^n |T|^\top |\ba_n|^\top \cr
&= \sum_{n=0}^\infty \left(\bone^\top\otimes |T^{-1}|^\top \right) \left( \diag(|\ba_n|)\otimes |J|^n \right) {\rm vec}(|T|^\top) \cr
&= \left(\bone^\top\otimes (|T^{-1}|)^\top \right) \diag\Big( \sum_{n=0}^\infty [|\ba_n|]_j |J|^n,\,j=1,2,...,s_0 \Big) {\rm vec}(|T|^\top), 
\end{align}
where we use the identity $\vecM(ABC)=(C^\top\otimes A) \vecM(B)$ for matrices $A$, $B$ and $C$. Note that 
\begin{align}
&\sum_{n=0}^\infty [|\ba_n|]_j |J|^n = \diag\Big( \sum_{n=0}^\infty [|\ba_n|]_j J_{k_s}(|\lambda_s|)^n,\,s=1,2,...,l \Big),  
\end{align}
and we have, for $t,u\in\{1,2,...,k_s\}$ such that $t\le u$,  
\begin{align}
&\sum_{n=0}^\infty \bigl[ [|\ba_n|]_j J_{k_s}(|\lambda_s|)^n \bigr]_{t,u} \cr
=&\ \sum_{n=0}^{k_s-2} \bigl[ [|\ba_n|]_j J_{k_s}(|\lambda_s|)^n \bigr]_{t,u} + \sum_{n=k_s-1}^\infty \frac{n!}{(u-t)! (n-u+t)!} [|\ba_n|]_j\,|\lambda_s|^{n-u+t} \cr
\le&\ \sum_{n=0}^{k_s-2} \bigl[ [|\ba_n|]_j J_{k_s}(|\lambda_s|)^n \bigr]_{t,u} + \sum_{n=u-t}^\infty \frac{n!}{(n-u+t)!} [|\ba_n|]_j\,|\lambda_s|^{n-u+t} \cr
=& \sum_{n=0}^{k_s-2} \bigl[ [|\ba_n|]_j J_{k_s}(|\lambda_s|)^n \bigr]_{t,u} + \frac{d^{u-t}}{d\,w^{u-t}}\,[\bphi_{abs}(w)]_j \Big|_{w=|\lambda_s|}, 
\label{eq:anJlambda}
\end{align}
where $\bphi_{abs}(w)=\sum_{n=0}^\infty |\ba_n|\,w^n$. For $w\in\mathbb{C}$ such that $|w|<r$, since $\bphi(w)$ is absolutely convergent, $\bphi_{abs}(w)$ is also absolutely convergent and analytic. Hence, $\bphi_{abs}(w)$ is differentiable any number of times and we know that the second term on the last line of formula (\ref{eq:anJlambda}) is finite since $|\lambda_s|\le \spr(X)<r$; this completes the proof.
\end{proof}

%
%
\subsection{Proof of Lemma 4.1 of Ref.\ \cite{Ozawa18a}} \label{sec:Lemma4_1}

For $z\in\mathbb{C}$, we define $G_1(z)$ in a manner similar to that used for defining so-called {\it G-matrices} of QBD process (see Neuts \cite{Neuts94}). For $z\in\mathbb{C}$, we also define $R_1(z)$ and $N_1(z)$. 
For the purpose, we use the following sets of index sequences: for $n\ge 1$ and for $m\ge 1$, 
\begin{align*}
&\scrI_n = \biggl\{\bi_{(n)}\in\mathbb{H}^n:\ \sum_{l=1}^k i_l\ge 0\ \mbox{for $k\in\{1,2,...,n-1\}$}\ \mbox{and} \sum_{l=1}^n i_l=0 \biggr\}, \\
&\scrI_{D,m,n} = \biggl\{\bi_{(n)}\in\mathbb{H}^n:\ \sum_{l=1}^k i_l\ge -m+1\ \mbox{for $k\in\{1,2,...,n-1\}$}\ \mbox{and} \sum_{l=1}^n i_l=-m \biggr\}, \\
&\scrI_{U,m,n} = \biggl\{\bi_{(n)}\in\mathbb{H}^n:\ \sum_{l=1}^k i_l\ge 1\ \mbox{for $k\in\{1,2,...,n-1\}$}\ \mbox{and} \sum_{l=1}^n i_l=m \biggr\}, 
\end{align*}
where $\bi_{(n)}=(i_1,i_2,...,i_n)$. 
For $n\ge 1$, define $Q_{11}^{(n)}(z)$, $D_1^{(n)}(z)$ and $U_1^{(n)}(z)$ as 
\begin{align*}
&Q_{11}^{(n)}(z) = \sum_{\bi_{(n)}\in\scrI_n} A_{*,i_1}(z) A_{*,i_2}(z) \cdots A_{*,i_n}(z), \\
&D_1^{(n)}(z) = \sum_{\bi_{(n)}\in\scrI_{D,1,n}} A_{*,i_1}(z) A_{*,i_2}(z) \cdots A_{*,i_n}(z), \\
&U_1^{(n)}(z) = \sum_{\bi_{(n)}\in\scrI_{U,1,n}} A_{*,i_1}(z) A_{*,i_2}(z) \cdots A_{*,i_n}(z), 
\end{align*}
and define $N_1(z)$, $R_1(z)$ and $G_1(z)$ as 
\begin{align*}
&N_1(z) = \sum_{n=0}^\infty Q_{11}^{(n)}(z),\quad 
G_1(z) = \sum_{n=1}^\infty D_1^{(n)}(z), \quad 
R_1(z) = \sum_{n=1}^\infty U_1^{(n)}(z),
\end{align*}
where $Q_{11}^{(0)}(z)=I$. 
$N_1(z)$, $G_1(z)$ and $R_1(z)$ satisfy the following properties. 

\bigskip\noindent\textbf{Lemma 4.1 of Ref.\ \cite{Ozawa18a}.} 
{\it 
For $z\in\mathbb{C}[\underline{z}_1^*,\bar{z}_1^*]$, the following statements hold.
\begin{itemize}
\item[(i)] $N_1(z)$, $G_1(z)$ and $R_1(z)$ converge absolutely and satisfy 
\begin{equation}
|N(z)|\le N(|z|),\quad 
|G_1(z)|\le G_1(|z|), \quad 
|R_1(z)|\le R_1(|z|). 
\label{eq:absNRG}
\end{equation}
\item[(ii)] $G_1(z)$ and $R_1(z)$ are represented in terms of $N_1(z)$, as follows. 
\begin{equation}
G_1(z) = N_1(z) A_{*,-1}(z),\quad 
R_1(z) = A_{*,1}(z) N_1(z).
\end{equation}
\item[(iii)] $G_1(z)$ and $R_1(z)$ satisfy the following matrix quadratic equations: 
\begin{align}
A_{*,-1}(z)+A_{*,0}(z) G_1(z)+A_{*,1}(z) G_1(z)^2 = G_1(z), \label{eq:G1equation} \\
R_1(z)^2 A_{*,-1}(z)+R_1(z) A_{*,0}(z)+A_{*,1}(z) = R_1(z). \label{eq:R1equation}
\end{align}
\item[(iv)] Define $H_1(z)$ as $H_1(z) = A_{*,0}(z)+A_{*,1}(z) N_1(z) A_{*,-1}(z)$, then $N_1(z)$ satisfies 
\begin{equation}
(I-H_1(z)) N_1(z) = I. 
\label{eq:H1N1_relation}
\end{equation}
\item[(v)] For nonzero $w\in\mathbb{C}$, $I-C(z,w)$ satisfies the following factorization (see, for example,  Ref.\ \cite{Miyazawa04}). 
\begin{equation}
I-C(z,w) = \bigl(w^{-1}I-R_1(z)\bigr) \bigl(I-H_1(z)\bigr) \bigl(w I-G_1(z)\bigr). 
\label{eq:WFfact}
\end{equation}
\end{itemize}
}
%
%
\begin{proof}
{\it (i)}\quad 
First, we note that if $z$ is a positive real number in $[\underline{z}_1^*,\bar{z}_1^*]$, then $N_1(z)$, $R_1(z)$ and $G_1(z)$ are finite. 
Let $z$ be a complex number satisfying $|z|\in[\underline{z}_1^*,\bar{z}_1^*]$. Since $A_{i,j},\,i,j\in\mathbb{H}$, are nonnegative, we have for $j\in\mathbb{H}$, 
\[
|A_{*,j}(z)| = \biggl| \sum_{i\in\mathbb{H}} A_{i,j} z^{i} \biggr| 
\le \sum_{i\in\mathbb{H}} A_{i,j} |z|^i = A_{*,j}(|z|), 
\]
and for $n\ge 0$,  
\[
|Q_{11}^{(n)}(z)| = \biggl| \sum_{\bi_{(n)}\in\scrI_n} A_{*,i_1}(z) A_{*,i_2}(z) \cdots A_{*,i_n}(z) \biggr|
\le Q_{11}^{(n)}(|z|). 
\]
From this and the fact that $N_1(|z|)=\sum_{n=0}^\infty Q_{11}^{(n)}(|z|)$ is finite (convergent), we see that $N_1(z)=\sum_{n=0}^\infty Q_{11}^{(n)}(z)$ converges absolutely and obtain $|N_1(z)|\le N_1(|z|)$. 
Analogously, we see that both $G_1(z)$ and $R_1(z)$ also converge absolutely and satisfy expression (\ref{eq:absNRG}).

{\it (ii)}\quad 
Since, for $n\ge 1$, $\scrI_{D,1,n}$ and $\scrI_{U,1,n}$ satisfy 
\begin{align*}
\scrI_{D,1,n} &= \biggl\{\bi_{(n)}\in\mathbb{H}^n:\ \sum_{l=1}^k i_l\ge 0\ \mbox{for $k\in\{1,2,...,n-2\}$},\ \sum_{l=1}^{n-1} i_l=0\ \mbox{and}\ i_n=-1 \biggr\} \cr
&= \bigl\{(\bi_{(n-1)},-1):\ \bi_{(n-1)}\in\scrI_{n-1} \bigr\}, \\
\scrI_{U,1,n} &= \biggl\{\bi_{(n)}\in\mathbb{H}^n:\ i_1=1,\,\sum_{l=2}^k i_l\ge 0\ \mbox{for $k\in\{2,...,n-1\}$}\ \mbox{and} \sum_{l=2}^n i_l=0 \biggr\} \cr 
&= \bigl\{(1,\bi_{(n-1)}):\ \bi_{(n-1)}\in\scrI_{n-1} \bigr\},
\end{align*}
where $\bi_{(n)}=(i_1,i_2,...,i_n)$, we have 
\begin{align*}
&G_1(z) = \sum_{n=1}^\infty Q_{11}^{(n-1)}(z) A_{*,-1}(z)  = N_1(z) A_{*,-1}(z), \\
&R_1(z) = \sum_{n=1}^\infty A_{*,1}(z) Q_{11}^{(n-1)}(z) = A_{*,1}(z) N_1(z). 
\end{align*}. 

{\it (iii)}\quad 
We prove only equation (\ref{eq:G1equation}) since equation (\ref{eq:R1equation}) can analogously be proved. 
For $n\ge 3$, $\scrI_{D,1,n}$ satisfies 
\begin{align*}
\scrI_{D,1,n} 
&= \biggl\{\bi_{(n)}\in\mathbb{H}^n:\ i_1=0,\ \sum_{l=2}^k i_l\ge 0\ \mbox{for $k\in\{2,...,n-1\}$},\ \sum_{l=2}^n i_l=-1 \biggr\} \cr
&\qquad \bigcup\,\biggl\{\bi_{(n)}\in\mathbb{H}^n:\ i_1=1,\ \sum_{l=2}^k i_l\ge -1\ \mbox{for $k\in\{2,...,n-1\}$},\ \sum_{l=2}^n i_l=-2 \biggr\} \cr
&= \bigl\{(0,\bi_{(n-1)}):\ \bi_{(n-1)}\in\scrI_{D,1,n-1} \bigr\} \cup \bigl\{(1,\bi_{(n-1)}):\ \bi_{(n-1)}\in\scrI_{D,2,n-1} \bigr\}, 
\end{align*}
and $\scrI_{D,2,n}$ satisfies 
\begin{align*}
\scrI_{D,2,n} 
&= \bigcup_{m=1}^{n-1} \biggl\{\bi_{(n)}\in\mathbb{H}^n:\ \sum_{l=1}^k i_l\ge 0\ \mbox{for $k\in\{1,2,...,m-1\}$},\ \sum_{l=1}^{m} i_l=-1,\cr
&\qquad\qquad\qquad \sum_{l=m+1}^k i_l\ge 0\ \mbox{for $k\in\{m+1,m+2,...,n-1\}$}\ \mbox{and} \sum_{l=m+1}^{n} i_l=-1\biggr\} \cr
&= \bigcup_{m=1}^{n-1} \bigl\{(\bi_{(m)},\bi_{(n-m)}):\ \bi_{(m)}\in\scrI_{D,1,m}\ \mbox{and}\ \bi_{(n-m)}\in\scrI_{D,1,m-n} \bigr\}. 
\end{align*}
Hence, we have, for $n\ge 3$, 
\begin{align*}
D_1^{(n)}(z) 
&= A_{*,0}(z) D_1^{(n-1)}(z) + A_{*,1}(z) \sum_{\bi_{(n-1)}\in\scrI_{D,2,n-1}} A_{*,i_1}(z) A_{*,i_2}(z) \cdots A_{*,i_{n-1}}(z) \cr
&= A_{*,0}(z) D_1^{(n-1)}(z) + A_{*,1}(z) \sum_{m=1}^{n-2} D_1^{(m)}(z) D_1^{(n-m-1)}(z), 
\end{align*} 
and obtain
\begin{align*}
G_1(z) 
&= D_1^{(1)}(z) + \sum_{n=2}^\infty A_{*,0}(z) D_1^{(n-1)}(z) + A_{*,1}(z) \sum_{n=3}^\infty \sum_{m=1}^{n-2} D_1^{(m)}(z) D_1^{(n-m-1)}(z) \cr
&= A_{*,-1}(z) + A_{*,0}(z) G_1(z) + A_{*,1}(z) G_1(z)^2, 
\end{align*}
where we use the fact that $D_1^{(1)}(z)=A_{*,-1}(z)$ and $D_1^{(2)}(z)=A_{*,0}(z) D_1^{(1)}(z)=A_{*,0}(z) A_{*,-1}(z)$. 

{\it (iv)}\quad 
For $n\ge 1$, $\scrI_{n}$ satisfies 
\begin{align*}
\scrI_{n} 
&= \biggl\{\bi_{(n)}\in\mathbb{H}^n:\ i_1=0,\ \sum_{l=2}^k i_l\ge 0\ \mbox{for $k\in\{2,...,n-1\}$},\ \sum_{l=2}^n i_l=0 \biggr\} \cr
&\qquad \bigcup\,\biggl\{\bi_{(n)}\in\mathbb{H}^n:\ i_1=1,\ \sum_{l=2}^k i_l\ge -1\ \mbox{for $k\in\{2,...,n-1\}$},\ \sum_{l=2}^n i_l=-1 \biggr\} \cr
&= \{(0,\bi_{(n-1)}):\ \bi_{(n-1)}\in\scrI_{n-1} \} \cr
&\qquad \bigcup \biggl( \bigcup_{m=2}^n\,\biggl\{\bi_{(n)}\in\mathbb{H}^n:\ i_1=1,\ \sum_{l=2}^k i_l\ge 0\ \mbox{for $k\in\{2,...,m-1\}$},\ \sum_{l=2}^m i_l=-1, \cr
&\qquad\qquad\qquad \sum_{l=m+1}^k i_l\ge 0\ \mbox{for $k\in\{m+1,...,n-1\}$},\ \sum_{l=m+1}^n i_l=0 \biggr\} \biggr) \cr
&= \{(0,\bi_{(n-1)}):\ \bi_{(n-1)}\in\scrI_{n-1} \} \cr
&\qquad \cup \left(\cup_{m=2}^n \{(1,\bi_{(m-1)},\bi_{(n-m)}):\ \bi_{(m-1)}\in\scrI_{D,1,m-1}, \bi_{(n-m)}\in\scrI_{n-m} \} \right),  
\end{align*}
where $\scrI_0=\emptyset$. Hence, we have, for $n\ge 1$, 
\[
Q_{1,1}^{(n)}(z) = A_{*,0}(z) Q_{1,1}^{(n-1)} + \sum_{m=2}^n A_{*,1}(z) D_1^{(m-1)}(z) Q_{1,1}^{(n-m)}(z), 
\] 
and this leads us to 
\begin{align*}
N_1(z) = I + A_{*,0}(z) N_1(z) + A_{*,1}(z) G_1(z) N_1(z).
\end{align*}
From this equation, we immediately obtain equation (\ref{eq:H1N1_relation}).

{\it (v)}\quad 
Substituting $N_1(z) A_{*,-1}(z)$, $A_{*,1}(z) N_1(z)$ and $A_{*,0}(z)+A_{*,1}(z) N_1(z) A_{*,-1}(z)$ for $G_1(z)$, $R_1(z)$ and $H_1(z)$, respectively, in the right hand side of equation (\ref{eq:WFfact}), we obtain the left hand side of the equation via straightforward calculation.
\end{proof}

%
%
\subsection{Proof of Proposition 4.1 of Ref.\ \cite{Ozawa18a}}

\noindent\textbf{Proposition 4.1 of Ref.\ \cite{Ozawa18a}.} 
{\it 
Under Assumption \ref{as:Akl_irreducible}, for $z,w\in\mathbb{C}$ such that $z\ne 0$ and $w\ne 0$, if $|z|\ne z$ or $|w|\ne w$, then $\spr(C(z,w))<\spr(C(|z|,|w|)$. 
}
%
\begin{proof}
Set $z=r e^{i \theta}$ and $w=r' e^{i \theta'}$, where $r,r'>0$, $\theta, \theta'\in[0,2\pi)$ and $i=\sqrt{-1}$. 
For $n\ge 1$ and $j\in S_0$, $C(z,w)^n$ satisfies 
\begin{align}
\Bigl| \big[ C(z,w)^n \big]_{jj} \Bigr|
&= \biggl| \sum_{\bk_{(n)}\in\mathbb{H}^n} \sum_{\bl_{(n)}\in\mathbb{H}^n} \big[ A_{k_1,l_1} A_{k_2,l_2} \cdots A_{k_n,l_n} \big]_{jj} \cr  
&\qquad\qquad\qquad\qquad \cdot r^{\sum_{p=1}^n k_p} (r')^{\sum_{p=1}^n l_p} e^{i (\theta \sum_{p=1}^n k_p + \theta' \sum_{p=1}^n l_p)} \biggr| \cr 
&\le \sum_{\bk_{(n)}\in\mathbb{H}^n} \sum_{\bl_{(n)}\in\mathbb{H}^n} \big[ A_{k_1,l_1} A_{k_2,l_2} \cdots A_{k_n,l_n} \big]_{jj}\,r^{\sum_{p=1}^n k_p} (r')^{\sum_{p=1}^n l_p} \cr
&= \big[ C(|z|,|w|)^n \big]_{jj}, 
\label{eq:Czwn}
\end{align}
where $\bk_{(n)}=(k_1,k_2,...,k_n)$ and $\bl_{(n)}=(l_1,l_2,...,l_n)$. In this formula, equality holds only when, for every $\bk_{(n)},\bl_{(n)}\in\mathbb{H}^n$ such that $\big[ A_{k_1,l_1} A_{k_2,l_2} \cdots A_{k_n,l_n} \big]_{jj}\ne 0$, $e^{i (\theta \sum_{p=1}^n k_p + \theta' \sum_{p=1}^n l_p)}$ takes some common value. 
Consider the Markov chain $\{\tilde{\bY}_n\}=\{(\tilde{X}_{1,n},\tilde{X}_{2,n},\tilde{J}_n)\}$ generated by $\{A_{k,l}:k,l\in\mathbb{H}\}$ (see Subsection \ref{sec:tdQBD}) and assume that $\{\tilde{\bY}_n\}$ starts from the state $(0,0,j)$. 
Since $\{\tilde{\bY}_n\}$ is irreducible and aperiodic, there exists $n_0\ge 1$ such that $\mathbb{P}(\tilde{\bY}_{n_0}=(0,0,j)\,|\,\tilde{\bY}_0=(0,0,j))>0$, $\mathbb{P}(\tilde{\bY}_{n_0}=(1,0,j)\,|\,\tilde{\bY}_0=(0,0,j))>0$ and $\mathbb{P}(\tilde{\bY}_{n_0}=(0,1,j)\,|\,\tilde{\bY}_0=(0,0,j))>0$. 
This implies that, for some $\bk_{(n_0)}$, $\bl_{(n_0)}$, $\bk_{(n_0)}'$, $\bl_{(n_0)}'$, $\bk_{(n_0)}''$ and $\bl_{(n_0)}''$ in $\mathbb{H}^{n_0}$, we have 
\begin{align*}
&\big[ A_{k_1,l_1} A_{k_2,l_2} \cdots A_{k_{n_0},l_{n_0}} \big]_{jj}>0,\quad \sum_{p=1}^{n_0} k_p=0,\quad \sum_{p=1}^{n_0} l_p =0,\\
&\big[ A_{k_1',l_1'} A_{k_2',l_2'} \cdots A_{k_{n_0}',l_{n_0}'} \big]_{jj}>0,\quad \sum_{p=1}^{n_0} k_p'=1,\quad \sum_{p=1}^{n_0} l_p' =0,\\
&\big[ A_{k_1'',l_1''} A_{k_2'',l_2''} \cdots A_{k_{n_0}'',l_{n_0}''} \big]_{jj}>0,\quad \sum_{p=1}^{n_0} k_p''=0,\quad \sum_{p=1}^{n_0} l_p'' =1. 
\end{align*} 
Hence, we see that if equality holds in formula (\ref{eq:Czwn}), then $e^{i \theta}=e^{i \theta'}=1$ and both $\theta$ and $\theta'$ must be zero. Therefore, if $\theta\ne 0$ or $\theta'\ne 0$, then we have 
\[
\big| [C(z,w)^{n_0}]_{jj} \big| = \Bigl| \big[ C(r e^{i\theta},r' e^{i\theta'})^{n_0} \big]_{jj} \Bigr| < \big[ C(r,r')^{n_0} \big]_{jj} = \big[ C(|z|,|w|)^{n_0} \big]_{jj}, 
\]
and obtain 
\begin{align}
\spr(C(z,w))^{n_0} &\le \spr(|C(z,w)^{n_0}|) < \spr(C(|z|,|w|)^{n_0}) = \spr(C(|z|,|w|))^{n_0},  
\end{align}
where we use Theorem 1.5 of Seneta \cite{Seneta06} and the fact that $C(|z|,|w|))$ is irreducible. Obviously, this implies that $\spr(C(z,w))<\spr(C(|z|,|w|))$. 
\end{proof}

%
%
\subsection{Proof of Propositions 5.1 and 5.2 of Ref.\ \cite{Ozawa18a}}

For $w\in\mathbb{C}$, $\bvarphi_2(w)$ is defined as $\bvarphi_2(w)=\sum_{j=1}^\infty \bnu_{0,j} w^j$. By Proposition 3.1 of Ref.\ \cite{Ozawa18a}, the radius of convergence of $\bvarphi_2(w)$ is given by $r_2$. 
For a scalar $z$ and matrix $X$, define vector functions $\bvarphi_2(X)$ and $\bvarphi_2^{\hat{C}_2}(z,X)$ as 
\begin{align*}
&\bvarphi_2(X) = \sum_{j=1}^\infty \bnu_{0,j} X^j, \quad 
\bvarphi_2^{\hat{C}_2}(z,X) = \sum_{j=1}^\infty \bnu_{0,j} \hat{C}_2(z,X) X^{j-1}, 
\end{align*}
where 
\[
\hat{C}_2(z,X) = \sum_{j\in\mathbb{H}} A_{*,j}^{(2)}(z) X^{j+1},\quad 
A_{*,j}^{(2)}(z) = \sum_{i\in\mathbb{H}_+} A_{i,j}^{(2)} z^i.
\]

\bigskip\noindent\textbf{Proposition 5.1 of Ref.\ \cite{Ozawa18a}.} 
{\it 
If $G_1(z)$ is entry-wise analytic and satisfies $\spr(G_1(z))<r_2$ in a region $\Omega\subset\mathbb{C}$, then $\bvarphi_2(G_1(z))$ and $\bvarphi_2^{\hat{C}_2}(z,G_1(z))$ are also entry-wise analytic in the same region $\Omega$. 
}
%
%
\begin{proof}
Let $X=(x_{k,l})$ be an $s_0\times s_0$ complex matrix. By Lemma 3.2 of Ref.\ \cite{Ozawa18a}, 
if $\spr(X)<r_2$, then $\bvarphi_2(X)$ converges absolutely. This means that each element of $\bvarphi_2(X)$ is an absolutely convergent series with $s_0^2$ valuables $x_{11},x_{12},...,x_{s_0,s_0}$ and it is analytic in the region $\{X=(x_{k,l})\in\mathbb{C}^{s_0^2}: \spr(X)<r_2\}$ as a function of $s_0^2$ complex valuables.
Therefore, if $G_1(z)$ is entry-wise analytic in $\Omega$ and $\spr(G_1(z))<r_2$, we see that $\bvarphi_2(G_1(z))=(\bvarphi_2\circ G_1)(z)$ is also entry-wise analytic in $\Omega$. 
For $z\in\mathbb{C}$ such that $|z|<r_2$, we have 
\[
\sum_{j=1}^\infty \Bigl| \bnu_{0,j} \hat{C}_2(z,G_1(z))\, z^{j-1} \Bigr|
\le |z|^{-1} \sum_{j=1}^\infty \bnu_{0,j} |z|^j\, \hat{C}_2(|z|,G_1(|z|))  
< \infty. 
\]
Hence, by Lemma 3.2 of Ref.\ \cite{Ozawa18a}, 
if $\spr(X)<r_2$, $\sum_{j=1}^\infty \bnu_{0,j} \hat{C}_2(z,G_1(z))\, X^{j-1}$ converges absolutely, and we see that if $G_1(z)$ is entry-wise analytic in $\Omega$ and $\spr(G_1(z))<r_2$, then $\bvarphi_2^{\hat{C}_2}(z,G_1(z))$ is entry-wise analytic in $\Omega$. 
\end{proof}

%
%

\bigskip
\noindent\textbf{Proposition 5.2 of Ref.\ \cite{Ozawa18a}.} 
{\it 
Under Assumption \ref{as:Akl_irreducible}, for any $z\in\mathbb{C}(\underline{z}_1^*,\bar{z}_1^*]$ such that $z\ne |z|$, we have 
\begin{equation}
\spr(C_1(z,G_1(z)))<\spr(C_1(|z|,G_1(|z|))). 
\end{equation}
}
\begin{proof}
Set $z=r e^{i\theta}$, where $r>0$, $\theta\in[0,2\pi)$ and $i=\sqrt{-1}$. By the definition of $G_1(z)$, we have
\begin{align}
C_1(z,G_1(z)) &= \sum_{n=1}^\infty\,\sum_{\bk_{(n+1)}\in\mathbb{H}^{n+1}}\,\sum_{\bl_{(n)}\in\scrI_{D,1,n}} A^{(1)}_{k_1,1} A_{k_2,l_1} A_{k_3,l_2} \cdots A_{k_{n+1},l_{n}} \cr
&\qquad\qquad\qquad\qquad\qquad \cdot r^{\sum_{p=1}^{n+1} k_p} e^{i (\theta \sum_{p=1}^{n+1} k_p)} + \sum_{k\in\mathbb{H}} A^{(1)}_{k,0} r^{k} e^{i\theta k}, 
\label{eq:C1zG1_Y1n}
\end{align}
where $\bk_{(n+1)}=(k_1,k_2,...,k_{n+1})$ and $\bl_{(n)}=(l_1,l_2,...,l_n)$.
Consider the Markov chain $\{\tilde{\bY}^{(1)}_n\}=\{(\tilde{X}^{(1)}_{1,n},\tilde{X}^{(1)}_{2,n},\tilde{J}^{(1)}_n)\}$ generated by $\{ \{A_{k,l}:k,l\in\mathbb{H}\}, \{A^{(1)}_{k,l}:k\in\mathbb{H}, l\in\mathbb{H}_+\} \}$ (see Subsection \ref{sec:tdQBD}), and assume that $\{\tilde{\bY}^{(1)}_n\}$ starts from a state in $\{0\}\times\{0\}\times S_0$. 
For $n_0\in\mathbb{N}$, let $\tau_{n_0}$ be the time when the Markov chain enters a state in $\mathbb{Z}\times\{0\}\times S_0$ for the $n_0$-th time. Then, the term $\sum_{p=1}^{n+1} k_p$ (resp.\ $k$) in expression (\ref{eq:C1zG1_Y1n}) indicates that $\tilde{X}^{(1)}_{1,\tau_1}=\sum_{p=1}^{n+1} k_p$ (resp.\ $\tilde{X}^{(1)}_{1,\tau_1}=k$). 
This point analogously holds for $\tilde{X}^{(1)}_{1,\tau_{n_0}}$ when $n_0>1$. Hence, $C_1(z,G_1(z))^{n_0}$ can be represented as 
\begin{align}
C_1(z,G_1(z))^{n_0} = \sum_{k\in\mathbb{Z}} \tilde{D}_k r^k e^{i\theta k}, 
\label{eq:C1zG1_Dre1}
\end{align}
where $k$ indicates that $\tilde{X}^{(1)}_{1,\tau_{n_0}}=k$ and the $(j,j')$-entry of nonnegative square matrix $\tilde{D}_k$ is given as $[\tilde{D}_k]_{jj'}=\mathbb{P}(\tilde{\bY}^{(1)}_{\tau_{n_0}}=(k,0,j')\,|\,\tilde{\bY}^{(1)}_0=(0,0,j))$. 
We have 
\begin{align}
|C_1(z,G_1(z))^{n_0}| = \left|\sum_{k\in\mathbb{Z}} \tilde{D}_k r^k e^{i\theta k}\right| \le C_1(|z|,G_1(|z|))^{n_0}, 
\label{eq:C1zG1_Dre2}
\end{align}
and equality holds only when, for each $j,j'\in S_0$ and for every $k\in\mathbb{Z}$ such that $[\tilde{D}_k]_{jj'}>0$, $e^{i\theta k}$ takes some common value. 
Under Assumption \ref{as:Akl_irreducible}, $\{\tilde{\bY}^{(1)}_n\}$ is irreducible and aperiodic, and for any $j,j'\in S_0$, there exists $n_0\ge 1$ such that $\mathbb{P}(\tilde{\bY}^{(1)}_{\tau_{n_0}}=(0,0,j')\,|\,\tilde{\bY}^{(1)}_0=(0,0,j))>0$ and $\mathbb{P}(\tilde{\bY}^{(1)}_{\tau_{n_0}}=(1,0,j')\,|\,\tilde{\bY}^{(1)}_0=(0,0,j))>0$. 
This implies that $[\tilde{D}_0]_{jj'}>0$ and $[\tilde{D}_1]_{jj'}>0$. Hence, if $\theta\ne 0$ ($z\ne |z|$), then we have 
\[
|[C_1(z,G_1(z))^{n_0}]_{jj'}| < [C_1(|z|,G_1(|z|))^{n_0}]_{jj'}, 
\label{eq:C1zG1_Dre3}
\]
and obtain 
\begin{align}
\spr(C_1(z,G_1(z))^{n_0}) \le \spr(|C_1(z,G_1(z))^{n_0}|) < \spr(C_1(|z|,G_1(|z|))^{n_0}),  
\end{align}
where we use Theorem 1.5 of Seneta \cite{Seneta06} and the fact that $C_1(|z|,G_1(|z|))$ is irreducible. Obviously, this implies that $\spr(C_1(z,G_1(z))) < \spr(C_1(|z|,G_1(|z|)))$ when $z\ne |z|$. 
\end{proof}

%
%
\subsection{Proof of Propositions 5.6 and 5.7 of Ref.\ \cite{Ozawa18a}}

%
%

%
Let $\alpha_j(z),\,j=1,2,...,s_0,$ be the eigenvalues of $G_1(z)$, counting multiplicity, and without loss of generality, let $\alpha_{s_0}(z)$ be the eigenvalue corresponding to $\underline{\zeta}_2(z)$ when $z\in[\underline{z}_1^*,\bar{z}_1^*]$. For $j\in\{1,2,...,s_0\}$, let $\bv_j(z)$ be the right eigenvector of $G_1(z)$ with respect to the eigenvalue $\alpha_j(z)$. 
Define matrices $J_1(z)$ and $V_1(z)$ as $J_1(z) = \diag(\alpha_1(z),\alpha_2(z),...,\alpha_{s_0}(z))$ and $V_1(z)=\begin{pmatrix} \bv_1(z) & \bv_2(z) & \cdots & \bv_{s_0}(z) \end{pmatrix}$, respectively. Under Assumption \ref{as:G1_eigen_z1max}, $V_1(z)$ is nonsingular in a neighborhood of $z=r_1$, and $G_1(z)$ is factorized as $G_1(z)=V_1(z) J_1(z) V_1(z)^{-1}$ (Jordan decomposition).  
We represent $V_1(z)^{-1}$ as 
\[
V_1(z)^{-1} = \begin{pmatrix} \bu_1(z) \cr \bu_2(z) \cr \vdots \cr \bu_{s_0}(z) \end{pmatrix}, 
\]
where each $\bu_j(z)$ is a $1\times s_0$ vector. For $j\in\{1,2,...,s_0\}$, $\bu_j(z)$ is the left eigenvector of $G_1(z)$ with respect to the eigenvalue $\alpha_j(z)$ and satisfies $\bu_j(z) \bv_k(z)=\delta_{jk}$ for $k\in\{1,2,...,s_0\}$, where $\delta_{jk}$ is the Kronecker delta. 
Under Assumption \ref{as:G1_eigen_z1max}, if $r_1<\bar{z}_1^*$, $J_1(z)$ and $V_1(z)$ are entry-wise analytic in a neighborhood of $z=r_1$; if $r_1=\bar{z}_1^*$, $J_1(z)$ and $V_1(z)$ except for $\alpha_{s_0}(z)$ and $\bv_{s_0}(z)$ are also entry-wise analytic in a neighborhood of $z=\bar{z}_1^*$, where $\alpha_{s_0}(z)$ and $\bv_{s_0}(z)$ are given in terms of a function $\tilde{\alpha}_{s_0}(\zeta)$ and vector function $\tilde{\bv}_{s_0}(\zeta)$ analytic in a neighborhood of the origin as $\alpha_{s_0}(z)=\tilde{\alpha}_{s_0}((\bar{z}_1^*-z)^{\frac{1}{2}})$ and $\bv_{s_0}(z)=\tilde{\bv}_{s_0}((\bar{z}_1^*-z)^{\frac{1}{2}})$ (see Section 4 of Ref.\ \cite{Ozawa18a}). 
%

Define $\tilde{J}_1(\zeta)$ and $\tilde{V}_1(\zeta)$ as 
\begin{align*}
&\tilde{J}_1(\zeta) = \diag\big(\alpha_1(\bar{z}_1^*-\zeta^2),...,\alpha_{s_0-1}(\bar{z}_1^*-\zeta^2),\tilde{\alpha}_{s_0}(\zeta) \big), \\
&\tilde{V}_1(\zeta) = \begin{pmatrix} \bv_1(\bar{z}_1^*-\zeta^2) & \cdots & \bv_{s_0-1}(\bar{z}_1^*-\zeta^2) & \tilde{\bv}_{s_0}(\zeta) \end{pmatrix}.
\end{align*}
Under Assumption \ref{as:G1_eigen_z1max}, $\tilde{J}_1(\zeta)$ and $\tilde{V}_1(\zeta)$ are entry-wise analytic in a neighborhood of the origin and $\tilde{V}_1(\zeta)$ is nonsingular. Therefore, $\tilde{G}_1(\zeta)$ defined as $\tilde{G}_1(\zeta) = \tilde{V}_1(\zeta) \tilde{J}_1(\zeta) \tilde{V}_1(\zeta)^{-1}$ is entry-wise analytic in a neighborhood of the origin and satisfies 
\begin{equation}
G_1(z) = \tilde{G}_1((\bar{z}_1^*-z)^{\frac{1}{2}}). 
\label{eq:tildeG1}
\end{equation}
Since $\tilde{\alpha}_{s_0}(\zeta)$ and $\tilde{G}_1(\zeta)$ are analytic in a neighborhood of the origin, their Taylor series are represented as 
\begin{align}
\tilde{\alpha}_{s_0}(\zeta) = \sum_{k=0}^\infty \tilde{\alpha}_{s_0,k}\, \zeta^k, \quad 
\tilde{G}_1(\zeta) = \sum_{k=0}^\infty \tilde{G}_{1,k}\, \zeta^k, 
\end{align}
where $\tilde{\alpha}_{s_0,0}=\underline{\zeta}_2(\bar{z}_1^*)$ and $\tilde{G}_{1,0}=G_1(\bar{z}_1^*)$. The Puiseux series for $G_1(z)$ at $z=\bar{z}_1^*$ is represented as 
\[
G_1(z) = \sum_{k=0}^\infty \tilde{G}_{1,k}\, (\bar{z}_1^*-z)^{\frac{k}{2}}. 
\]

\medskip
In the following proposition and its proof,  $\tilde{\alpha}_{s_0,1}$, $\bu_{s_0}(z)$ and $\bv_{s_0}(z)$ are denoted by $\tilde{\alpha}_{s_0,1}^{G_1}$, $\bu_{s_0}^{G_1}(z)$ and $\bv_{s_0}^{G_1}(z)$, respectively, in order to explicitly indicate that they are a coefficient and vector with respect to $G_1(z)$. This notation is also used in Proposition 5.7 of Ref.\ \cite{Ozawa18a} and Proposition \ref{pr:limit_varphi2}. 

%
\bigskip\noindent\textbf{Proposition 5.6 of Ref.\ \cite{Ozawa18a}.} 
{\it 
\begin{align}
\lim_{\tilde{\Delta}_{\bar{z}_1^*}\ni z\to \bar{z}_1^*} \frac{G_1(\bar{z}_1^*)-G_1(z)}{(\bar{z}_1^*-z)^{\frac{1}{2}}} 
&= -\tilde{G}_{1,1} 
=  -\tilde{\alpha}_{s_0,1}^{G_1} N_1(\bar{z}_1^*) \bv^{R_1}(\bar{z}_1^*) \bu_{s_0}^{G_1}(\bar{z}_1^*) \ge \bzero^\top,\ \ne\bzero^\top, 
\label{eq:limit_G1}
\end{align}
where $\bv^{R_1}(\bar{z}_1^*)$ is the right eigenvector of $R_1(\bar{z}_1^*)$ with respect to the eigenvalue $\bar{\zeta}_2(\bar{z}_1^*)^{-1}$, satisfying $\bu_{s_0}^{G_1}(\bar{z}_1^*) N_1(\bar{z}_1^*) \bv^{R_1}(\bar{z}_1^*) =1$. 
}
%

\begin{proof}
$\tilde{G}_{1,1}$ is given by $\tilde{G}_{1,1}=(d/d\zeta)\,\tilde{G}_1(\zeta)\,|_{\zeta=0}$.
Differentiating both sides of $\tilde{G}_1(\zeta)=\tilde{V}_1(\zeta)\tilde{J}_1(\zeta)\tilde{V}_1(\zeta)^{-1}$ and setting $\zeta=0$, we obtain 
\begin{equation}
\tilde{G}_{1,1} = \tilde{\alpha}_{s_0,1}^{G_1}\, \bv^\dagger\, \bu_{s_0}^{G_1}(\bar{z}_1^*), 
\label{eq:tildeG1_diff1}
\end{equation}
where 
$\bv^\dagger = \bv_{s_0}^{G_1}(\bar{z}_1^*) + (\tilde{\alpha}_{s_0,1}^{G_1} )^{-1} ( \underline{\zeta}_2(\bar{z}_1^*) I - G_1(\bar{z}_1^*) ) \tilde{\bv}_{s_0,1}$ and 
$\tilde{\bv}_{s_0,1} = (d/d\zeta)\,\tilde{\bv}_{s_0}(\zeta)|_{\zeta=0}$. In the derivation of equation (\ref{eq:tildeG1_diff1}), we use the following identity:
\[
\frac{d}{d\,\zeta} \tilde{V}_1(\zeta)^{-1} = -\tilde{V}_1(\zeta)^{-1} \left(\frac{d}{d\,\zeta} \tilde{V}_1(\zeta)\right) \tilde{V}_1(\zeta)^{-1}. 
\]
Note that $\bu_{s_0}^{G_1}(\bar{z}_1^*) \bv^\dagger = 1$ and $\tilde{\alpha}_{s_0,1}^{G_1}$ is an eigenvalue of $\tilde{G}_{1,1}$. 
By equation (\ref{eq:G1equation}), $\tilde{G}_1(\zeta)$ satisfies 
\begin{equation}
\tilde{G}_1(\zeta) = A_{*,-1}(\bar{z}_1^*-\zeta^2)+A_{*,0}(\bar{z}_1^*-\zeta^2) \tilde{G}_1(\zeta)+A_{*,1}(\bar{z}_1^*-\zeta^2) \tilde{G}_1(\zeta)^2. 
\label{eq:tildeG1_equation}
\end{equation}
Differentiating both sides of equation (\ref{eq:tildeG1_equation}) and setting $\zeta=0$, we obtain 
\begin{equation}
\tilde{G}_{1,1} = A^\dagger\, \tilde{G}_{1,1}, 
\label{eq:tildeG1_diff2}
\end{equation}
where $A^\dagger=A_{*,0}(\bar{z}_1^*) + \underline{\zeta}_2(\bar{z}_1^*) A_{*,1}(\bar{z}_1^*) + A_{*,1}(\bar{z}_1^*) G_1(\bar{z}_1^*)$. In the derivation of equation (\ref{eq:tildeG1_diff2}), we use the fact that $\tilde{G}_{1,1} G_1(\bar{z}_1^*) = \underline{\zeta}_2(\bar{z}_1^*) \tilde{G}_{1,1}$.
Multiplying both sides of equation (\ref{eq:tildeG1_diff2}) by $\bv_{s_0}^{G_1}(\bar{z}_1^*)$ from the right, we obtain $A^\dagger \bv^\dagger = \bv^\dagger$. Hence, $\bv^\dagger$ is the right eigenvector of $A^\dagger$ with respect to the eigenvalue 1. 
%
From equation (\ref{eq:WFfact}), we obtain 
\begin{equation}
I-A^\dagger = (\bar{\zeta}_2(\bar{z}_1^*)^{-1}I-R_1(\bar{z}_1^*)) (I-H_1(\bar{z}_1^*)),
\end{equation}
and from equation (\ref{eq:H1N1_relation}), we know that $N_1(\bar{z}_1^*)=(I-H_1(\bar{z}_1^*))^{-1}$. Hence, $N_1(\bar{z}_1^*) \bv^{R_1}(\bar{z}_1^*)$ is the right eigenvector of $A^\dagger$ with respect to the eigenvalue 1, and we see that $\bv^\dagger$ can be given by $\bv^\dagger = N_1(\bar{z}_1^*) \bv^{R_1}(\bar{z}_1^*)$. 
Since $\bu_{s_0}^{G_1}(\bar{z}_1^*)$ is the left eigenvector of $\tilde{G}_{1,1}$ with respect to the eigenvalue $\tilde{\alpha}_{s_0,1}^{G_1}$, $\bv^{R_1}(\bar{z}_1^*)$ must satisfy $\bu_{s_0}^{G_1}(\bar{z}_1^*) N_1(\bar{z}_1^*) \bv^{R_1}(\bar{z}_1^*)=1$. 
Since $\tilde{\alpha}_{s_0,1}^{G_1}$ is negative, nonnegativity of $-\tilde{G}_{1,1}$ is obvious. Since $N_1(\bar{z}_1^*)\ge I$, we have $N_1(\bar{z}_1^*) \bv^{R_1}(\bar{z}_1^*)\ge \bv^{R_1}(\bar{z}_1^*)\ne \bzero$ and $-\tilde{G}_{1,1}$ is nonzero. 
\end{proof}

%
%

\bigskip
From the results in Section 5 of Ref.\ \cite{Ozawa18a}, we know that $\bvarphi_2(G_1(z))$ is represented as 
\begin{align}
\bvarphi_2(G_1(z)) = \begin{pmatrix} \bvarphi_2(\alpha_1(z))\bv_1(z) & \bvarphi_2(\alpha_2(z))\bv_2(z) & \cdots & \bvarphi_2(\alpha_{s_0}(z))\bv_{s_0}(z) \end{pmatrix} V(z)^{-1}. 
\label{eq:varphi2G1_extension1}
\end{align} 
Assume $r_1<\bar{z}_1^*$ and $\spr(G_1(r_1))=r_2<\bar{z}_2^*$. Then, the Laurent series for $\bvarphi_2(G_1(z))$ at $z=r_1$ is represented as
\begin{equation}
\bvarphi_2(G_1(z)) = \sum_{k=-1}^\infty \bvarphi_{2,k}^{G_1}\, (r_1-z)^k. 
\label{eq:varphi2G1_1}
\end{equation}
Assume $r_1=\bar{z}_1^*$ and define $\tilde{\bvarphi}_2(\tilde{G}_1(\zeta))$ as 
\begin{align}
\tilde{\bvarphi}_2(\tilde{G}_1(\zeta)) 
&= \Bigl( \bvarphi_2(\alpha_1(\bar{z}_1^*-\zeta^2))\bv_1(\bar{z}_1^*-\zeta^2)\ \  \cdots \cr
&\qquad \bvarphi_2(\alpha_{s_0-1}(\bar{z}_1^*-\zeta^2))\bv_{s_0-1}(\bar{z}_1^*-\zeta^2)\ \ \bvarphi_2(\tilde{\alpha}_{s_0}(\zeta))\tilde{\bv}_{s_0}(\zeta) \Bigr) \tilde{V}(\zeta)^{-1}. 
\label{eq:varphi2G1_extension2}
\end{align} 
If $\spr(\tilde{G}_1(0))=\spr(G_1(\bar{z}_1^*))<r_2$, then the Puiseux series for $\bvarphi_2(G_1(z))$ at $z=\bar{z}_1^*$ is represented as
\begin{align}
\bvarphi_2(G_1(z)) 
= \tilde{\bvarphi}_2(\tilde{G}_1((\bar{z}_1^*-z)^{\frac{1}{2}})) 
= \sum_{k=0}^\infty \tilde{\bvarphi}_{2,k}^{\tilde{G}_1}\, (\bar{z}_1^*-z)^{\frac{k}{2}}, 
\label{eq:varphi2G1_2}
\end{align}
where $\tilde{\bvarphi}_{2,0}^{\tilde{G}_1}=\bvarphi_2(G_1(\bar{z}_1^*))$. If $\spr(\tilde{G}_1(0))=\spr(G_1(\bar{z}_1^*))=r_2$, then the Puiseux series for $\bvarphi_2(G_1(z))$ at $z=\bar{z}_1^*$ is represented as
\begin{align}
\bvarphi_2(G_1(z)) 
= \tilde{\bvarphi}_2(\tilde{G}_1((\bar{z}_1^*-z)^{\frac{1}{2}})) 
= \sum_{k=-1}^\infty \tilde{\bvarphi}_{2,k}^{\tilde{G}_1}\, (\bar{z}_1^*-z)^{\frac{k}{2}}. 
\label{eq:varphi2G1_3}
\end{align}


In the following proposition, $\bu^{C_2}(w)$ is the left eigenvector of $C_2(G_2(w),w)$ with respect to the eigenvalue $\psi_2(w)$. For the definition of $c_{pole}^{\bvarphi_2}$, see Corollary 5.1 of Ref.\ \cite{Ozawa18a}. 

%
\bigskip\noindent\textbf{Proposition 5.7 of Ref.\ \cite{Ozawa18a}.} 
{\it 
In the case of Type I, if $r_1=\bar{z}_1^*$, then $\spr(G_1(\bar{z}_1^*))<r_2$ and $\tilde{\bvarphi}_{2,1}^{\tilde{G}_1}$ in expression (\ref{eq:varphi2G1_2}) is given by 
\begin{align}
\tilde{\bvarphi}_{2,1}^{\tilde{G}_1} 
&= \sum_{k=1}^\infty \bnu_{0,k} \sum_{l=1}^k \underline{\zeta}_2(\bar{z}_1^*)^{k-l} G_1(\bar{z}_1^*)^{l-1} \tilde{G}_{1,1} \le \bzero^\top,\ \ne \bzero^\top. 
\label{eq:limit_varphi2G1_z1max_1}
\end{align}
In the case of Type II, if $r_1<\bar{z}_1^*$, then $\spr(G_1(r_1))=r_2<\bar{z}_2^*$ and $\bvarphi_{2,-1}^{G_1}$ in expression (\ref{eq:varphi2G1_1}) is given by 
\begin{align}
\bvarphi_{2,-1}^{G_1} 
&= \underline{\zeta}_{2,z}(r_1)^{-1}\,c_{pole}^{\bvarphi_2} \bu^{C_2}(r_2)\,\bv_{s_0}^{G_1}(r_1) \bu_{s_0}^{G_1}(r_1) \ge \bzero^\top,\ \ne \bzero^\top,  
\label{eq:limit_varphi2G1_r1_II_1}
\end{align}
where $\underline{\zeta}_{2,z}(z)=(d/d z) \underline{\zeta}_2(z)$; if $r_1=\bar{z}_1^*$, then $\spr(G_1(\bar{z}_1^*))=r_2<\bar{z}_2^*$ and $\tilde{\bvarphi}_{2,-1}^{\tilde{G}_1}$ in expression (\ref{eq:varphi2G1_3}) is given by 
\begin{align}
\tilde{\bvarphi}_{2,-1}^{\tilde{G}_1} 
&= (-\tilde{\alpha}_{s_0,1}^{G_1})^{-1}\,c_{pole}^{\bvarphi_2} \bu^{C_2}(r_2)\,\bv_{s_0}^{G_1}(\bar{z}_1^*) \bu_{s_0}^{G_1}(\bar{z}_1^*) \ge \bzero^\top,\ \ne \bzero^\top. 
\label{eq:limit_varphi2G1_z1max_II_2}
\end{align}
}

\bigskip
Before proving Proposition 5.7 of Ref.\ \cite{Ozawa18a}, we give the following proposition. 
%
\begin{proposition} \label{pr:limit_varphi2} 
In the case of Type II, if $r_1<\bar{z}_1^*$, then $\underline{\zeta}_2(r_1)=r_2<\bar{z}_2^*$ and 
\begin{align}
\lim_{z\to r_1} (r_1-z) \bvarphi_2(\alpha_{s_0}(z)) 
&= \underline{\zeta}_{2,z}(r_1)^{-1}\,c_{pole}^{\bvarphi_2}\, \bu^{C_2}(r_2) > \bzero^\top;  
\label{eq:limit_varphi2_r1}
\end{align}
if $r_1=\bar{z}_1^*$, then $\underline{\zeta}_2(\bar{z}_1^*)=r_2<\bar{z}_2^*$ and 
\begin{align}
\lim_{\tilde{\Delta}_{\bar{z}_1^*}\ni z\to \bar{z}_1^*} (\bar{z}_1^*-z)^{\frac{1}{2}} \bvarphi_2(\alpha_{s_0}(z)) 
&= (-\tilde{\alpha}_{s_0,1}^{G_1})^{-1}\,c_{pole}^{\bvarphi_2}\, \bu^{C_2}(r_2) > \bzero^\top. 
\label{eq:limit_varphi2_z1max}
\end{align}
\end{proposition}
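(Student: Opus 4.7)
The plan is to combine two ingredients. The first is the simple-pole expansion of $\bvarphi_2(w)$ at $w=r_2$ given by Corollary 5.1 of Ref.\ \cite{Ozawa18a}, which yields
\[
\lim_{w\to r_2} (r_2-w)\,\bvarphi_2(w) = c_{pole}^{\bvarphi_2}\,\bu^{C_2}(r_2).
\]
The second is a local expansion of $\alpha_{s_0}(z)$ at $z=r_1$: Taylor when $r_1<\bar{z}_1^*$, Puiseux when $r_1=\bar{z}_1^*$. The main trick is the algebraic identity
\[
(r_1-z)\,\bvarphi_2(\alpha_{s_0}(z)) = \frac{r_1-z}{r_2-\alpha_{s_0}(z)}\cdot\bigl((r_2-\alpha_{s_0}(z))\,\bvarphi_2(\alpha_{s_0}(z))\bigr)
\]
(with $(\bar{z}_1^*-z)^{1/2}$ replacing $(r_1-z)$ in the second sub-case), which separates the divergent prefactor from the pole residue and reduces the limit to a product of two factors with known limits.

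Before applying the identity I would verify the geometric claim $\underline{\zeta}_2(r_1)=r_2<\bar{z}_2^*$. In Type II, $(\xi_1,\xi_2)=(\bar{\eta}_1^{(c)},\eta_2^{(c)})$, so $r_2=e^{\eta_2^{(c)}}$ and $r_1=\bar{\zeta}_1(r_2)=e^{\bar{\eta}_1^{(c)}}$; the point $(\log r_1,\log r_2)$ lies on $\chi(e^{s_1},e^{s_2})=1$. Using convexity of $\bar{\Gamma}$ together with the Type-II configuration and $\eta_2^{(c)}>0$ (Lemma 2.5 of Ref.\ \cite{Ozawa18a}), the point sits on the increasing portion of the lower branch $s_1\mapsto\log\underline{\zeta}_2(e^{s_1})$, giving $\underline{\zeta}_2(r_1)=r_2$ and $\underline{\zeta}_{2,z}(r_1)>0$. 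Strict inequality $r_2<\bar{z}_2^*$ follows because $r_2=\bar{z}_2^*$ would collapse $\underline{\zeta}_1(r_2)$ and $\bar{\zeta}_1(r_2)$, contradicting $\eta_1^{(c)}<\bar{\eta}_1^{(c)}$ required by Type II.

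In the sub-case $r_1<\bar{z}_1^*$, $\alpha_{s_0}(z)$ is analytic at $z=r_1$ with $\alpha_{s_0}(r_1)=r_2$ and derivative $\alpha_{s_0}'(r_1)=\underline{\zeta}_{2,z}(r_1)$; the Taylor expansion yields $r_2-\alpha_{s_0}(z)=\underline{\zeta}_{2,z}(r_1)(r_1-z)+O((r_1-z)^2)$, whence the first factor of the identity tends to $\underline{\zeta}_{2,z}(r_1)^{-1}$, and Corollary 5.1 applied to $\alpha_{s_0}(z)\to r_2$ handles the second. In the sub-case $r_1=\bar{z}_1^*$, the Puiseux expansion recalled in Section 4 of Ref.\ \cite{Ozawa18a} gives $\alpha_{s_0}(z)=r_2+\tilde{\alpha}_{s_0,1}^{G_1}(\bar{z}_1^*-z)^{1/2}+O(\bar{z}_1^*-z)$, hence $r_2-\alpha_{s_0}(z)=-\tilde{\alpha}_{s_0,1}^{G_1}(\bar{z}_1^*-z)^{1/2}+O(\bar{z}_1^*-z)$, and the prefactor $(\bar{z}_1^*-z)^{1/2}/(r_2-\alpha_{s_0}(z))$ tends to $(-\tilde{\alpha}_{s_0,1}^{G_1})^{-1}$. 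Strict positivity of each resulting limit follows from $\bu^{C_2}(r_2)>\bzero^\top$ (Perron left eigenvector of an irreducible matrix), $c_{pole}^{\bvarphi_2}>0$ (Corollary 5.1), $\underline{\zeta}_{2,z}(r_1)>0$ from above, and $-\tilde{\alpha}_{s_0,1}^{G_1}>0$ by Proposition 5.6.

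The principal obstacle will be the $r_1=\bar{z}_1^*$ sub-case, where I must verify that as $z\to\bar{z}_1^*$ within $\tilde{\Delta}_{\bar{z}_1^*}(\varepsilon,\theta)$, the image $\alpha_{s_0}(z)$ stays inside the open disk $\{|w|<r_2\}$ on which $\bvarphi_2(w)$ is analytic and approaches $r_2$ along a direction compatible with Corollary 5.1. The cut built into $\tilde{\Delta}_{\bar{z}_1^*}$ lets me fix the principal branch of $(\bar{z}_1^*-z)^{1/2}$; together with $\tilde{\alpha}_{s_0,1}^{G_1}<0$ this drives $\alpha_{s_0}(z)$ into a wedge approaching $r_2$ essentially from the left, where the pole asymptotic of Corollary 5.1 remains valid, and the remaining $O(\bar{z}_1^*-z)$ correction is negligible relative to the leading $(\bar{z}_1^*-z)^{1/2}$ term. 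The first sub-case is then comparatively routine once the sign of $\underline{\zeta}_{2,z}(r_1)$ has been settled.
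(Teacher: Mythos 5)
Your proposal is correct and follows essentially the same route as the paper: the same factorization $(r_1-z)\,\bvarphi_2(\alpha_{s_0}(z)) = \bigl((r_2-\alpha_{s_0}(z))\,\bvarphi_2(\alpha_{s_0}(z))\bigr)\cdot\tfrac{r_1-z}{r_2-\alpha_{s_0}(z)}$, the same use of Corollary 5.1 of Ref.\ \cite{Ozawa18a} for the residue factor, and the same Taylor/Puiseux expansions of $\alpha_{s_0}(z)$ for the prefactor. The only difference is that you re-derive by hand the sector argument for the $r_1=\bar{z}_1^*$ case, which the paper delegates to Proposition 5.5 of Ref.\ \cite{Ozawa18a}.
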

\begin{proof}
In the case of Type II, we have $\alpha_{s_0}(r_1)=\underline{\zeta}_2(e^{\bar{\eta}_1^{(c)}})=e^{\eta_2^{(c)}}=r_2<\bar{z}_2^*$, and the point $w=r_2$ is a pole of $\bvarphi_2(w)$ with order one. 
If $r_1<\bar{z}_1^*$, $\alpha_{s_0}(z)$ is analytic at $z=r_1$ and, by Corollary 5.1 of Ref.\ \cite{Ozawa18a}, we have 
\begin{align}
\lim_{z\to r_1} (r_1-z)\,\bvarphi_2(\alpha_{s_0}(z)) 
&= \lim_{z\to r_1} (r_2-\alpha_{s_0}(z))\,\bvarphi_2(\alpha_{s_0}(z))\, \frac{r_1-z}{r_2-\alpha_{s_0}(z)} \cr
&= c_{pole}^{\bvarphi_2}\, \bu^{C_2}(r_2)/\underline{\zeta}_{2,z}(r_1), 
\end{align}
and if $r_1=\bar{z}_1^*$, 
\begin{align}
\lim_{\tilde{\Delta}_{\bar{z}_1^*}\ni z\to \bar{z}_1^*} (\bar{z}_1^*-z)^{\frac{1}{2}}\,\bvarphi_2(\alpha_{s_0}(z)) 
&= \lim_{\tilde{\Delta}_{\bar{z}_1^*}\ni z\to \bar{z}_1^*} (r_2-\alpha_{s_0}(z))\,\bvarphi_2(\alpha_{s_0}(z))\, \biggl( - \frac{(\bar{z}_1^*-z)^\frac{1}{2}}{\alpha_{s_0}(z)-r_2} \biggr) \cr
&= c_{pole}^{\bvarphi_2}\, \bu^{C_2}(r_2) /(-\tilde{\alpha}_{s_0,1}^{G_1}), 
\end{align}
where we use Proposition 5.5 of Ref.\ \cite{Ozawa18a}. 
\end{proof}

%
\medskip
\begin{proof}[Proof of Proposition 5.7 of Ref.\ \cite{Ozawa18a}]
In the case of Type I, we always have $\spr(G_1(r_1))=\underline{\zeta}_2(r_1)<r_2$. Assume $r_1=\bar{z}_1^*$. By the definition of $\tilde{\bvarphi}_2(\tilde{G}_1(\zeta))$, we have $\tilde{\bvarphi}_2(\tilde{G}_1(\zeta))=\bvarphi_2(\tilde{G}_1(\zeta))$. Therefore, by Proposition 5.1 of Ref.\ \cite{Ozawa18a}, since $\tilde{G}_1(0)=\spr(G_1(\bar{z}_1^*))<r_2$, $\tilde{\bvarphi}_2(\tilde{G}_1(\zeta))$ is given in a form of absolutely convergent series as
\begin{equation}
\tilde{\bvarphi}_2(\tilde{G}_1(\zeta)) = \sum_{k=0}^\infty \bnu_{0,k} \tilde{G}_1(\zeta)^k.
\end{equation}
This $\tilde{\bvarphi}_2(\tilde{G}_1(\zeta))$ is entry-wise analytic at $\zeta=0$ and we have
\begin{align}
\tilde{\bvarphi}_{2,1}^{\tilde{G}_1} 
= \frac{d}{d \zeta} \tilde{\bvarphi}_2(\tilde{G}_1(\zeta)) \Big|_{\zeta=0}
&= \sum_{k=1}^\infty \bnu_{0,k} \sum_{l=1}^k G_1(\bar{z}_1^*)^{l-1} \tilde{G}_{1,1} G_1(\bar{z}_1^*)^{k-l} \cr
&= \sum_{k=1}^\infty \bnu_{0,k} \sum_{l=1}^k \underline{\zeta}_2(\bar{z}_1^*)^{k-l} G_1(\bar{z}_1^*)^{l-1} \tilde{G}_{1,1},
\end{align}
where we use the fact that 
\[
\tilde{G}_{1,1} G_1(\bar{z}_1^*) = \tilde{\alpha}_{s_0,1}^{G_1} N_1(\bar{z}_1^*) \bv^{R_1}(\bar{z}_1^*) \bu_{s_0}^{G_1}(\bar{z}_1^*) G_1(\bar{z}_1^*) = \underline{\zeta}_2(\bar{z}_1^*) \tilde{G}_{1,1}. 
\]
Since $\tilde{G}_{1,1}$ is nonzero and nonpositive, $\tilde{\bvarphi}_{2,1}^{\tilde{G}_1}$ is also nonzero and nonpositive. 
In the case of Type II, we have $\spr(G_1(r_1))=\underline{\zeta}_2(r_1)=r_2<\bar{z}_2^*$ and $\bvarphi_2(w)$ has a pole at $w=\underline{\zeta}_2(r_1)$. Hence, if $r_1<\bar{z}_1^*$, we obtain from equation (\ref{eq:varphi2G1_extension1}) and Proposition \ref{pr:limit_varphi2} that 
\begin{align}
\bvarphi_{2,-1}^{G_1}
&=\lim_{z\to r_1} (r_1-z)\,\bvarphi_2(G_1(z)) \cr
&= \begin{pmatrix} \bzero & \cdots & \bzero & \underline{\zeta}_{2,z}(r_1)^{-1}\,c_{pole}^{\bvarphi_2}\, \bu^{C_2}(r_2)\,\bv_{s_0}^{G_1}(r_1) \end{pmatrix} V_1(r_1)^{-1} \cr
&= \underline{\zeta}_{2,z}(r_1)^{-1}\,c_{pole}^{\bvarphi_2}\, \bu^{C_2}(r_2)\,\bv_{s_0}^{G_1}(r_1) \bu_{s_0}^{G_1}(r_1), 
\label{eq:limit_varphisG1_II_1b}
\end{align}
where $\bu^{C_2}(r_2)\,\bv_{s_0}^{G_1}(r_1)>0$; if $r_1=\bar{z}_1^*$, we also obtain from equation (\ref{eq:varphi2G1_extension2}) and Proposition \ref{pr:limit_varphi2} that 
\begin{align}
\tilde{\bvarphi}_{2,-1}^{\tilde{G}_1}
&= \lim_{\tilde{\Delta}_{\bar{z}_1^*}\ni z\to \bar{z}_1^*} (\bar{z}_1^*-z)^{\frac{1}{2}}\,\bvarphi_2(G_1(z)) \cr
&= \begin{pmatrix} \bzero & \cdots & \bzero & (-\tilde{\alpha}_{s_0}^{G_1})^{-1}\,c_{pole}^{\bvarphi_2}\, \bu^{C_2}(r_2)\,\bv_{s_0}^{G_1}(\bar{z}_1^*) \end{pmatrix} V_1(\bar{z}_1^*)^{-1} \cr
&= (-\tilde{\alpha}_{s_0}^{G_1})^{-1}\,c_{pole}^{\bvarphi_2}\, \bu^{C_2}(r_2)\,\bv_{s_0}^{G_1}(\bar{z}_1^*) \bu_{s_0}^{G_1}(\bar{z}_1^*), 
\label{eq:limit_varphisG1_II_1c}
\end{align}
where $\bu^{C_2}(r_2)\,\bv_{s_0}^{G_1}(\bar{z}_1^*) >0$.
\end{proof}

%
%
\subsection{Proof of Proposition 5.10 of Ref.\ \cite{Ozawa18a}}

Denote by $\lambda^{C_1}(z)$ the eigenvalue of $C_1(z,G_1(z))$ corresponding to $\psi_1(z)$ when $z\in[\underline{z}_1^*,\bar{z}_1^*]$ and by $\bu^{C_1}(z)$ and $\bv^{C_1}(z)$ the left and right eigenvectors of $C_1(z,G_1(z))$ with respect to the eigenvalue $\lambda^{C_1}(z)$, satisfying $\bu^{C_1}(z) \bv^{C_1}(z)=1$. 
$\lambda^{C_2}(w)$, $\bu^{C_2}(w)$ and $\bv^{C_2}(w)$ are analogously defined for $C_2(G_2(w),w)$. 

\bigskip\noindent\textbf{Proposition 5.10 of Ref.\ \cite{Ozawa18a}.} 
{\it
For any $z_0\in[\underline{z}_1^*,\bar{z}_1^*]$ such that $\psi_1(z_0)=1$ and for any $k\in\mathbb{Z}_+$, $\bu^{C_1}(z_0)$ and $\bu^{C_1}(z_0) A_{*,1}^{(1)}(z_0) N_1(z_0) R_1(z_0)^k$ are positive. 
Analogously, for any $w_0\in[\underline{z}_2^*,\bar{z}_2^*]$ such that $\psi_2(w_0)=1$ and for any $k\in\mathbb{Z}_+$, $\bu^{C_2}(w_0)$ and $\bu^{C_2}(w_0) A_{1,*}^{(2)}(w_0) N_2(w_0) R_2(w_0)^k$ are also positive.
}
%
%
\begin{proof}
We prove only the first half of the proposition. 
For $z\in[\underline{z}_1^*,\bar{z}_1^*]$, define a nonnegative block tri-diagonal matrix $A^{(1)}_*(z)$ as 
\[
A^{(1)}_*(z) = 
\begin{pmatrix}
A^{(1)}_{*,0}(z) & A^{(1)}_{*,1}(z) & & & \cr
A_{*,-1}(z) & A_{*,0}(z) & A_{*,1}(z) & & \cr
& A_{*,-1}(z) & A_{*,0}(z) & A_{*,1}(z) & \cr
& & \ddots & \ddots & \ddots 
\end{pmatrix}.
\]
Under Assumption \ref{as:Akl_irreducible}, $A^{(1)}_*(z)$ is irreducible and aperiodic. 
Furthermore, for any $z_0\in[\underline{z}_1^*,\bar{z}_1^*]$ such that $\psi_1(z_0)=1$, the invariant measure $\bu_*^{(1)}(z_0)$ satisfying $\bu_*^{(1)}(z_0) A^{(1)}_*(z_0) = \bu_*^{(1)}(z_0)$ is given as follows (see Theorem 3.1 of Ozawa \cite{Ozawa13}): 
\begin{align}
\bu_*^{(1)}(z_0) &= \big( \bu^{C_1}(z_0)\ \ \bu^{C_1}(z_0) A_{*,1}^{(1)}(z_0) N_1(z_0)\ \ \bu^{C_1}(z_0) A_{*,1}^{(1)}(z_0) N_1(z_0) R_1(z_0) \cr
&\qquad\qquad \bu^{C_1}(z_0) A_{*,1}^{(1)}(z_0) N_1(z_0) R_1(z_0)^2\ \ \cdots\  \big).
\end{align}
By Theorem 6.3 of Seneta \cite{Seneta06}, this $\bu_*^{(1)}(z_0)$ is positive and hence the results of the proposition hold. 
\end{proof}

%
%
\subsection{Derivation of the coefficient vectors in Lemma 5.4 of Ref.\ \cite{Ozawa18a}} \label{sec:derivation_lemma5_4}

The vector generating function $\bvarphi_1(z)$ is  defined as $\bvarphi_1(z) = \sum_{i=1}^\infty \bnu_{i,0} z^i$. For notation undefined in this subsection, see Section 5 of Ref.\ \cite{Ozawa18a}. 

\bigskip\noindent\textbf{Lemma 5.4 of Ref.\ \cite{Ozawa18a}.} 
{\it 
(1) If $\psi_1(\bar{z}_1^*)=1$, then the point $z=\bar{z}_1^*$ is a branch point of $\bvarphi_1(z)$ with order one and it is possibly a pole with order one. The Puiseux series for $\bvarphi_1(z)$ at $z=\bar{z}_1^*$ is represented as 
\begin{equation}
\bvarphi_1(z) 
= \tilde{\bvarphi}_1((\bar{z}_1^*-z)^{\frac{1}{2}}) 
= \sum_{k=-1}^\infty \tilde{\bvarphi}_{1,k}^I\,(\bar{z}_1^*-z)^{\frac{k}{2}}, 
\label{eq:varphi1_expansion_Ib}
\end{equation}
where 
\begin{equation}
\tilde{\bvarphi}_{1,-1}^I 
=(-\tilde{\lambda}^{C_1}_\zeta(0))^{-1} \bg_1(\bar{z}_1^*)\bv^{C_1}(\bar{z}_1^*)\,\bu^{C_1}(\bar{z}_1^*). 
\label{eq:tildevarphiI1m1}
\end{equation}

(2) If $\psi_1(\bar{z}_1^*)<1$, then the point $z=\bar{z}_1^*$ is a branch point of $\bvarphi_1(z)$ with order one and its Puiseux series is represented as 
\begin{equation}
\bvarphi_1(z) 
= \tilde{\bvarphi}_1((\bar{z}_1^*-z)^{\frac{1}{2}}) 
= \sum_{k=0}^\infty \tilde{\bvarphi}_{1,k}^I\,(\bar{z}_1^*-z)^{\frac{k}{2}}, 
\label{eq:varphi1_expansion_Ic}
\end{equation}
where $\tilde{\bvarphi}_{1,0}^I=\bvarphi_1(\bar{z}_1^*)$ and 
\begin{align}
\tilde{\bvarphi}_{1,1}^I &= 
\Bigl( \underline{\zeta}_2(\bar{z}_1^*)^{-1} \bvarphi_2(\underline{\zeta}_2(\bar{z}_1^*)) \bigl( A_{*,0}^{(2)}(\bar{z}_1^*) + A_{*,1}^{(2)}(\bar{z}_1^*) ( \underline{\zeta}_2(\bar{z}_1^*) I + G_1(\bar{z}_1^*) ) \bigr) \cr
&\quad + \sum_{k=1}^\infty \bnu_{0,k} \hat{C}_2(\bar{z}_1^*,G_1(\bar{z}_1^*)) \sum_{l=1}^{k-1} \underline{\zeta}_2(\bar{z}_1^*)^{k-l-1} G_1(\bar{z}_1^*)^{l-1}  \cr
&\quad -\sum_{k=1}^\infty \bnu_{0,k} \sum_{l=1}^k \underline{\zeta}_2(\bar{z}_1^*)^{k-l} G_1(\bar{z}_1^*)^{l-1} 
+ \bnu_{0,0} A_{*,1}^{(0)}(\bar{z}_1^*) \cr
&\quad + \bvarphi_1(\bar{z}_1^*) A_{*,1}^{(1)}(\bar{z}_1^*) \Bigr) \tilde{G}_{1,1} \bigl(I-C_1(\bar{z}_1^*,G_1(\bar{z}_1^*)) \bigr)^{-1}. 
\label{eq:tildevarphiI11}
\end{align}
}

\bigskip
\noindent\textit{Derivation of $\tilde{\bvarphi}_{1,-1}^{I}$ and $\tilde{\bvarphi}_{1,1}^{I}$}:
\begin{itemize}
\item[(1)] $\tilde{\bvarphi}_{1,-1}^{I}$ of formula (\ref{eq:tildevarphiI1m1}).\quad 
From Propositions 5.9 and 5.11 of Ref.\ \cite{Ozawa18a}, 
we obtain 
\begin{align*}
\tilde{\bvarphi}_{1,-1}^{I} 
&= \lim_{\tilde{\Delta}_{\bar{z}_1^*}\ni z\to \bar{z}_1^*} (\bar{z}_1^*-z)^{\frac{1}{2}}\,\frac{\bg_1(z)\,\adj(I-C_1(z,G_1(z)))}{\tilde{f}_1(1,(\bar{z}_1^*-z)^{\frac{1}{2}})} 
=\frac{\bg_1(\bar{z}_1^*)\,\bv^{C_1}(\bar{z}_1^*)\,\bu^{C_1}(\bar{z}_1^*)}{-\tilde{\lambda}^{C_1}_\zeta(0)}.
\end{align*}

\item[(2)] $\tilde{\bvarphi}_{1,1}^{I}$ of formula (\ref{eq:tildevarphiI11}).\quad 
From Propositions 5.7 and 5.8 of Ref.\ \cite{Ozawa18a}, 
we obtain
\begin{align*}
\tilde{\bvarphi}_{1,1}^I 
&= \frac{d}{d\zeta}\, \tilde{\bg}_1(\zeta)\,(I-C_1(\bar{z}_1^*-\zeta^2,\tilde{G}_1(\zeta)))^{-1}\,\Big|_{\zeta=0} \cr
&= \bigl(\tilde{\bvarphi}^{\hat{C}_2}_{2,1} -\tilde{\bvarphi}_{2,1}^{\tilde{G}_1} +\bnu_{0,0} A_{*,1}^{(0)}(\bar{z}_1^*)\, \tilde{G}_{1,1} \bigr) (I-C_1(\bar{z}_1^*,G_1(\bar{z}_1^*)))^{-1} \cr
&\qquad + \bg_1(\bar{z}_1^*) (I-C_1(\bar{z}_1^*,G_1(\bar{z}_1^*)))^{-1} A_{*,1}^{(1)}(\bar{z}_1^*)\, \tilde{G}_{1,1}\, (I-C_1(\bar{z}_1^*,G_1(\bar{z}_1^*)))^{-1}, 
\end{align*}
and this leads us to formula (\ref{eq:tildevarphiI11}). 
\end{itemize}

%
%
\subsection{Derivation of the coefficient vectors in Lemma 5.5 of Ref.\ \cite{Ozawa18a}} \label{sec:derivation_lemma5_5}

For notation undefined in this subsection, see Section 5 of Ref.\ \cite{Ozawa18a}. 

\bigskip
\noindent\textbf{Lemma 5.5 of Ref.\ \cite{Ozawa18a}.} 
{\it 
(1) If $\eta_2^{(c)}<\theta_2^{(c)}$, then the point $z=r_1$ is a pole of $\bvarphi_1(z)$ with order one and its Laurent series is represented as 
\begin{equation}
\bvarphi_1(z) 
= \sum_{k=-1}^\infty \bvarphi_{1,k}^{II}\,(r_1-z)^k, 
\label{eq:varphi1_expansion_IIa}
\end{equation}
where 
\begin{align}
\bvarphi_{1,-1}^{II} 
&= \frac{c_{pole}^{\bvarphi_2} \bu^{C_2}(r_2) A_{1,*}^{(2)}(r_2) N_2(r_2) \bv^{R_2}(r_2)}{\underline{\zeta}_{2,z}(r_1)}\, \bu_{s_0}^{G_1}(r_1) \bigl( I-C_1(r_1,G_1(r_1)) \bigr)^{-1} > \bzero^\top 
\label{eq:varphiII1m1}
\end{align}
and $\bv^{R_2}(r_2)$ is the right eigenvector of $R_2(r_2)$ with respect to the eigenvalue $r_1^{-1}$.

(2) If $\eta_2^{(c)}=\theta_2^{(c)}$ and $\psi_1(\bar{z}_1^*)>1$, then the point $z=r_1$ is a pole of $\bvarphi_1(z)$ with order two and its Laurent series is represented as 
\begin{equation}
\bvarphi_1(z) 
= \sum_{k=-2}^\infty \bvarphi_{1,k}^{II}\,(r_1-z)^k, 
\label{eq:varphi1_expansion_IIb}
\end{equation}
where 
\begin{align}
\bvarphi_{1,-2}^{II} &=
\frac{c_{pole}^{\bvarphi_2} \bu^{C_2}(r_2) A_{1,*}^{(2)}(r_2) N_2(r_2) \bv^{R_2}(r_2) \bu_{s_0}^{G_1}(r_1) \bv^{C_1}(r_1)}{\underline{\zeta}_{2,z}(r_1)\, \psi_{1,z}(r_1)}\, \bu^{C_1}(r_1) > \bzero^\top. 
\label{eq:varphiII1m2}
\end{align}

(3) If $\eta_2^{(c)}=\theta_2^{(c)}$ and $\psi_1(\bar{z}_1^*)=1$, then the point $z=\bar{z}_1^*$ is a branch point of $\bvarphi_1(z)$ with order one and it is also a pole with order two. The Puiseux series for $\bvarphi_1(z)$ at $z=\bar{z}_1^*$ is represented as 
\begin{equation}
\bvarphi_1(z) 
= \tilde{\bvarphi}_1((\bar{z}_1^*-z)^{\frac{1}{2}}) 
= \sum_{k=-2}^\infty \tilde{\bvarphi}_{1,k}^{II}\,(\bar{z}_1^*-z)^{\frac{k}{2}}, 
\label{eq:varphi1_expansion_IIc}
\end{equation}
where 
\begin{align}
\tilde{\bvarphi}_{1,-2}^{II} &=
\frac{c_{pole}^{\bvarphi_2} \bu^{C_2}(r_2) A_{1,*}^{(2)}(r_2) N_2(r_2) \bv^{R_2}(r_2) \bu_{s_0}^{G_1}(\bar{z}_1^*) \bv^{C_1}(\bar{z}_1^*)}{\tilde{\alpha}_{s_0,1}^{G_1}\, \tilde{\lambda}_\zeta^{C_1}(0)}\, \bu^{C_1}(\bar{z}_1^*) > \bzero^\top. 
\label{eq:tildevarphiII1m2}
\end{align}

(4) If $\eta_2^{(c)}=\theta_2^{(c)}$ and $\psi_1(\bar{z}_1^*)<1$, then the point $z=\bar{z}_1^*$ is a branch point of $\bvarphi_1(z)$ with order one and it is also a pole with order one. The Puiseux series for $\bvarphi_1(z)$ at $z=\bar{z}_1^*$ is represented as 
\begin{equation}
\bvarphi_1(z) 
= \tilde{\bvarphi}_1((\bar{z}_1^*-z)^{\frac{1}{2}}) 
= \sum_{k=-1}^\infty \tilde{\bvarphi}_{1,k}^{II}\,(\bar{z}_1^*-z)^{\frac{k}{2}}, 
\label{eq:varphi1_expansion_IId}
\end{equation}
where 
\begin{align}
\tilde{\bvarphi}_{1,-1}^{II} 
&= \frac{c_{pole}^{\bvarphi_2} \bu^{C_2}(r_2) A_{1,*}^{(2)}(r_2) N_2(r_2) \bv^{R_2}(r_2)}{-\tilde{\alpha}_{s_0,1}^{G_1}}\, \bu_{s_0}^{G_1}(\bar{z}_1^*) \bigl( I-C_1(\bar{z}_1^*,G_1(\bar{z}_1^*)) \bigr)^{-1} > \bzero^\top. 
\label{eq:tildevarphiII1m1}
\end{align}
}

%
%
\bigskip
\noindent\textit{Derivation of $\bvarphi_{1,-1}^{II}$, $\bvarphi_{1,-2}^{II}$, $\tilde{\bvarphi}_{1,-2}^{II}$ and $\tilde{\bvarphi}_{1,-1}^{II}$}:

\medskip
Before deriving expressions for the coefficient vectors, we give the following proposition. 
\begin{proposition} \label{pr:vR2}
In the case of Type II, we have $\spr(G_1(r_1))=r_2$, $\spr(R_2(r_2))=r_1^{-1}$ and 
\begin{equation}
\bv_{s_0}^{G_1}(r_1) = (r_1 I-G_2(r_2))^{-1} N_2(r_2)\, \bv^{R_2}(r_2). 
\label{eq:vR2}
\end{equation}
%
\end{proposition}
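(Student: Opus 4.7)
The plan is to identify both sides as elements of the one-dimensional null space of $I-C(r_1,r_2)$ and then reduce the claim to a matter of normalization.

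First, I would establish the two spectral-radius identities from the Type II geometry. Since $(r_1,r_2)=(e^{\bar{\eta}_1^{(c)}},e^{\eta_2^{(c)}})$ lies on the algebraic curve $\chi(e^{s_1},e^{s_2})=1$ with $r_1=\bar{\zeta}_1(r_2)$, convexity of $\bar{\Gamma}$ forces $r_2=\underline{\zeta}_2(r_1)$, and then $\spr(G_1(r_1))=\underline{\zeta}_2(r_1)=r_2$ and $\spr(R_2(r_2))=\bar{\zeta}_1(r_2)^{-1}=r_1^{-1}$ follow from Proposition 2.5 of Ref.\ \cite{Ozawa18a}.

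Next I would write the Wiener-Hopf-type factorization (\ref{eq:WFfact}) at $(z,w)=(r_1,r_2)$ together with its coordinate-swapped analogue:
\[
I-C(r_1,r_2) = \bigl(r_2^{-1}I-R_1(r_1)\bigr)\bigl(I-H_1(r_1)\bigr)\bigl(r_2 I-G_1(r_1)\bigr) = \bigl(r_1^{-1}I-R_2(r_2)\bigr)\bigl(I-H_2(r_2)\bigr)\bigl(r_1 I-G_2(r_2)\bigr).
\]
The first form annihilates $\bv_{s_0}^{G_1}(r_1)$ because $G_1(r_1)\bv_{s_0}^{G_1}(r_1)=r_2\,\bv_{s_0}^{G_1}(r_1)$; the second form, combined with $N_2(r_2)=(I-H_2(r_2))^{-1}$ and $R_2(r_2)\bv^{R_2}(r_2)=r_1^{-1}\bv^{R_2}(r_2)$, shows that $\bx := (r_1 I-G_2(r_2))^{-1} N_2(r_2)\bv^{R_2}(r_2)$ also lies in $\Ker(I-C(r_1,r_2))$. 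The inverse $(r_1 I-G_2(r_2))^{-1}$ exists because $\spr(G_2(r_2))=\underline{\zeta}_1(r_2)<\bar{\zeta}_1(r_2)=r_1$ in the generic Type II case $r_2<\bar{z}_2^*$.

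Since $C(r_1,r_2)$ is irreducible and nonnegative with simple Perron-Frobenius eigenvalue $\chi(r_1,r_2)=1$, the null space of $I-C(r_1,r_2)$ is one-dimensional, so $\bv_{s_0}^{G_1}(r_1)=c\,\bx$ for some nonzero scalar $c$. The main obstacle is verifying $c=1$: this is purely a normalization issue, and the claimed identity is effectively to be read as specifying the normalization of $\bv^{R_2}(r_2)$ consistent with the normalization of $\bv_{s_0}^{G_1}(r_1)$ fixed by the Jordan decomposition of $G_1(r_1)$. In practice, pairing both candidate vectors with the common Perron left eigenvector $\bu^{C}(r_1,r_2)$ of $C(r_1,r_2)$ reduces the determination of $c$ to a single scalar identity, which then matches the convention adopted elsewhere in Section 5 of Ref.\ \cite{Ozawa18a}.
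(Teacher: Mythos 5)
Your argument is correct in substance and rests on the same two ingredients as the paper's proof: the spectral identities $\spr(G_1(r_1))=\underline{\zeta}_2(r_1)=r_2$, $\spr(G_2(r_2))=\underline{\zeta}_1(r_2)<r_1$, $\spr(R_2(r_2))=\bar{\zeta}_1(r_2)^{-1}=r_1^{-1}$, and the two Wiener--Hopf factorizations of $I-C(r_1,r_2)$ from Lemma 4.1(v) and its coordinate-swapped analogue. The one place where you diverge is the last step, and it is the weaker route. You pass through the observation that $\Ker(I-C(r_1,r_2))$ is one-dimensional (true, by irreducibility and $\chi(r_1,r_2)=1$), conclude $\bv_{s_0}^{G_1}(r_1)=c\,\bx$, and then defer the determination of $c$ to ``the convention adopted elsewhere.'' But the paper adopts no independent normalization of $\bv^{R_2}(r_2)$ elsewhere; the proposition itself is what fixes it, so your closing appeal is circular as stated. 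The cleaner move, which the paper makes, is to multiply the chain of equalities
\[
\bigl(r_2^{-1}I-R_1(r_1)\bigr)\bigl(I-H_1(r_1)\bigr)\bigl(r_2 I-G_1(r_1)\bigr)
=\bigl(r_1^{-1}I-R_2(r_2)\bigr)\bigl(I-H_2(r_2)\bigr)\bigl(r_1 I-G_2(r_2)\bigr)
\]
on the right by $\bv_{s_0}^{G_1}(r_1)$: the left form gives $\bzero$, so the right form shows that $\bw:=\bigl(I-H_2(r_2)\bigr)\bigl(r_1 I-G_2(r_2)\bigr)\bv_{s_0}^{G_1}(r_1)$ is a nonzero right eigenvector of $R_2(r_2)$ for the eigenvalue $r_1^{-1}$. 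Taking $\bv^{R_2}(r_2)=\bw$ and inverting the two nonsingular factors (using $N_2(r_2)=(I-H_2(r_2))^{-1}$) yields (\ref{eq:vR2}) with no free constant and no appeal to the simplicity of the Perron root of $C(r_1,r_2)$. With that adjustment your proof is complete.
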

\begin{proof}
In the case of Type II, we have $\spr(G_1(r_1))=\underline{\zeta}_2(r_1)=r_2$, $\spr(R_1(r_1))=\bar{\zeta}_2(r_1)^{-1} \le r_2^{-1}$, $\spr(G_2(r_2))=\underline{\zeta}_1(r_2)<r_1$ and $\spr(R_2(r_2))=\bar{\zeta}_1(r_2)^{-1}=r_1^{-1}$. 
Furthermore, by Lemma 4.1 and Remark 4.1 of Ref.\ \cite{Ozawa18a}, 
we have 
\begin{align}
I-C(r_1,r_2)
&= \bigl(r_2^{-1}I-R_1(r_1)\bigr) \bigl(I-H_1(r_1)\bigr) \bigl(r_2 I-G_1(r_1)\bigr) \cr
&= \bigl(r_1^{-1}I-R_2(r_2)\bigr) \bigl(I-H_2(r_2)\bigr) \bigl(r_1 I-G_2(r_2)\bigr). 
\end{align}
Multiplying both the sides of this equation by $\bv_{s_0}^{G_1}(r_1)$ from the right, we obtain 
\begin{equation}
\bigl(r_1^{-1}I-R_2(r_2)\bigr) \bigl(I-H_2(r_2)\bigr) \bigl(r_1 I-G_2(r_2)\bigr) \bv_{s_0}^{G_1}(r_1) = \bzero.
\end{equation}
Since both $\bigl(I-H_2(r_2)\bigr)$ and $\bigl(r_1 I-G_2(r_2)\bigr)$ are nonsingular, we obtain 
\[
\bv^{R_2}(r_2) = \bigl(I-H_2(r_2)\bigr) \bigl(r_1 I-G_2(r_2)\bigr) \bv_{s_0}^{G_1}(r_1)\ne \bzero, 
\]
and this leads us to expression (\ref{eq:vR2}), where we use the fact that $N_2(r_2)=(I-H_2(r_2))^{-1}$. 
\end{proof}

\bigskip
\begin{itemize}
\item[(1)] $\bvarphi_{1,-1}^{II}$ of formula (\ref{eq:varphiII1m1}).\quad 
Note that we have 
\begin{align}
\bu^{C_2}(r_2) (C_2(r_1,r_2)-I) 
&= \bu^{C_2}(r_2) (C_2(r_1,r_2)-C_2(G_2(r_2),r_2)) \cr
&= \bu^{C_2}(r_2) A_{1,*}^{(2)}(r_2)(r_1 I-G_2(r_2)). 
\label{eq:uC2_C2_G2}
\end{align}
Hence, from Propositions 5.7 and 5.8 of Ref.\ \cite{Ozawa18a}, 
we obtain
\begin{align*}
\bvarphi_{1,-1}^{II} 
&= \lim_{z\to r_1} (r_1-z)\, \bg_1(z)\,(I-C_1(z,G_1(z))^{-1} \cr
&= \bigl(\bvarphi^{\hat{C}_2}_{2,-1} -\bvarphi_{2,-1}^{\tilde{G}_1}\bigr) (I-C_1(r_1,G_1(r_1)))^{-1} \cr
&= \frac{c_{pole}^{\bvarphi_2} \bu^{C_2}(r_2) A_{1,*}^{(2)}(r_2)(r_1 I-G_2(r_2)) \bv_{s_0}^{G_1}(r_1) \bu_{s_0}^{G_1}(r_1) ( I-C_1(r_1,G_1(r_1)) )^{-1}}{\underline{\zeta}_{2,z}(r_1)}.
\end{align*}
This and Proposition \ref{pr:vR2} lead us to formula (\ref{eq:varphiII1m1}). 

\item[(2)] $\bvarphi_{1,-2}^{II}$ of formula (\ref{eq:varphiII1m2}).\quad 
From Propositions 5.3, 5.4, 5.7 and 5.8 of Ref.\ \cite{Ozawa18a}, 
we obtain
\begin{align*}
\bvarphi_{1,-2}^{II} 
&= \lim_{z\to r_1} (r_1-z)^2\, \frac{\bg_1(z)\,\adj (I-C_1(z,G_1(z)))}{f_1(1,z)} \cr
&= \frac{ (\bvarphi^{\hat{C}_2}_{2,-1} -\bvarphi_{2,-1}^{\tilde{G}_1} )\, \bv^{C_1}(r_1) \bu^{C_1}(r_1)}{\psi_{1,z}(r_1)} \cr
&= \frac{c_{pole}^{\bvarphi_2} \bu^{C_2}(r_2) A_{1,*}^{(2)}(r_2)(r_1 I-G_2(r_2)) \bv_{s_0}^{G_1}(r_1)\bu_{s_0}^{G_1}(r_1) \bv^{C_1}(r_1) \bu^{C_1}(r_1)}{\underline{\zeta}_{2,z}(r_1) \psi_{1,z}(r_1)}.
\end{align*}
This and Proposition \ref{pr:vR2} lead us to formula (\ref{eq:varphiII1m2}). 

\item[(3)] $\tilde{\bvarphi}_{1,-2}^{II}$ of formula (\ref{eq:tildevarphiII1m2}).\quad
From Propositions 5.7, 5.8, 5.9 and 5.11 of Ref.\ \cite{Ozawa18a}, 
we obtain
\begin{align*}
\tilde{\bvarphi}_{1,-2}^{II} 
&= \lim_{\tilde{\Delta}_{\bar{z}_1^*}\ni z\to \bar{z}_1^*} (\bar{z}_1^*-z)\, \frac{\tilde{\bg}_1((\bar{z}_1^*-z)^{\frac{1}{2}})\,\adj\big(I-C_1(z,\tilde{G}_1((\bar{z}_1^*-z)^{\frac{1}{2}}))\big)}{\tilde{f}_1(1,(\bar{z}_1^*-z)^{\frac{1}{2}})} \cr
&= \frac{(\tilde{\bvarphi}^{\hat{C}_2}_{2,-1} -\tilde{\bvarphi}_{2,-1}^{\tilde{G}_1} )\,\bv^{C_1}(\bar{z}_1^*) \bu^{C_1}(\bar{z}_1^*)}{-\tilde{\lambda}_\zeta^{C_1}(0)} \cr
&= \frac{c_{pole}^{\bvarphi_2} \bu^{C_2}(r_2) A_{1,*}^{(2)}(r_2)(\bar{z}_1^* I-G_2(r_2)) \bv_{s_0}^{G_1}(\bar{z}_1^*)\bu_{s_0}^{G_1}(\bar{z}_1^*)\bv^{C_1}(\bar{z}_1^*) \bu^{C_1}(\bar{z}_1^*)}{\tilde{\alpha}_{s_0,1}^{G_1} \tilde{\lambda}_\zeta^{C_1}(0)}.
\end{align*}
This and Proposition \ref{pr:vR2} lead us to formula (\ref{eq:tildevarphiII1m2}). 

\item[(4)] $\tilde{\bvarphi}_{1,-1}^{II}$ of formula (\ref{eq:tildevarphiII1m1}).\quad
From Propositions 5.7 and 5.8 of Ref.\ \cite{Ozawa18a}, 
we obtain
\begin{align*}
\tilde{\bvarphi}_{1,-1}^{II} 
&= \lim_{\tilde{\Delta}_{\bar{z}_1^*}\ni z\to \bar{z}_1^*} (\bar{z}_1^*-z)^{\frac{1}{2}}\, \tilde{\bg}_1((\bar{z}_1^*-z)^{\frac{1}{2}})\bigl(I-C_1(z,\tilde{G}_1((\bar{z}_1^*-z)^{\frac{1}{2}}))\bigr)^{-1} \cr
&= (\tilde{\bvarphi}^{\hat{C}_2}_{2,-1} -\tilde{\bvarphi}_{2,-1}^{\tilde{G}_1} )\, (I-C_1(\bar{z}_1^*,G_1(\bar{z}_1^*)))^{-1} \cr
&= \frac{c_{pole}^{\bvarphi_2} \bu^{C_2}(r_2) A_{1,*}^{(2)}(r_2)(\bar{z}_1^* I-G_2(r_2)) \bv_{s_0}^{G_1}(\bar{z}_1^*) \bu_{s_0}^{G_1}(\bar{z}_1^*) (I-C_1(\bar{z}_1^*,G_1(\bar{z}_1^*)))^{-1}}{-\tilde{\alpha}_{s_0,1}^{G_1} }.
\end{align*}
This and Proposition \ref{pr:vR2} lead us to formula (\ref{eq:tildevarphiII1m1}). 
\end{itemize}

%
%

\small
\renewcommand{\baselinestretch}{0.9} 
\normalsize

\end{document}